\theoremstyle{plain}
\newtheorem{theorem}{Theorem}[section]
\newtheorem{lemma}[theorem]{Lemma}
\newtheorem{corollary}[theorem]{Corollary}
\theoremstyle{remark}
\newtheorem{remark}[theorem]{Remark}
\numberwithin{equation}{section}
\newcommand{\R}{\mathbb{R}}
\newcommand{\F}{\mathcal{F}}
\renewcommand{\Im}{\operatorname{Im}}
\renewcommand{\Re}{\operatorname{Re}}
\newcommand{\I}{\infty}
\newcommand{\abs}[1]{\left\lvert #1\right\rvert}
\newcommand{\norm}[1]{\left\lVert #1\right\rVert}
\newcommand{\Lebn}[2]{\left\lVert #1 \right\rVert_{L^{#2}}}
\newcommand{\Jbr}[1]{\left\langle #1 \right\rangle}
\def\({\left(}
\def\){\right)}
\def\<{\left\langle}
\def\>{\right\rangle}
\def\le{\leqslant}
\def\ge{\geqslant}
\def\d{{\partial}}
\def\l{\lambda}
\newcommand{\al}{\alpha}
\newcommand{\eps}{\varepsilon}
\begin{document}
\title[Energy solution to 2D-SP system]{
Energy solution to
Schr\"odinger-Poisson system in the two-dimensional whole space}
\author[S. Masaki]{Satoshi Masaki}
\address{Division of Mathematics\\
Graduate School of Information Sciences\\
Tohoku University\\
Sendai 980-8579, Japan}
\email{masaki@ims.is.tohoku.ac.jp}
\begin{abstract}
We consider the Cauchy problem of 
the two-dimensional Schr\"odinger-Poisson system in the energy class.
Though the Newtonian potential diverges at the spatial infinity in the logarithmic order,
global well-posedness is proven in both defocusing and focusing cases.
The key is a decomposition of the nonlinearity into a sum of 
the linear logarithmic potential and a good remainder,
which enables us to apply the perturbation method.
Our argument can be adapted to the one-dimensional problem.
\end{abstract}
\maketitle

\section{Introduction}
This paper is devoted to the study of the Sch\"odinger-Poisson system
\begin{equation}\label{eq:oSP}
	\left\{
	\begin{aligned}
	&i\d_t u + \frac12 \Delta u = \l P u, \quad (t,x) \in \R^{1+2}, \\
	&-\Delta P =  |u|^2, \\
	&u(0,x) = u_0(x),
	\end{aligned}
	\right.
\end{equation}
where $\l$ is a real constant.
We suppose $P$ is the Newtonian potential
\begin{equation}\label{eq:NP}
	P = -\frac{1}{2\pi} (\log |x| * |u|^2)
\end{equation}
where $*$ denotes the convolution.
For a suitable $u$,
this is the unique strong solution of $-\Delta P=|u|^2$ under the condition
\[
	|\nabla P| \to 0 \text{ as }|x|\to \I, \quad
	\nabla P \in L^\I(\R^2), \quad
	P(0)=\int_{\R^2} (\log|y|)|u(y)|^2 dy
\]
(see \cite{Ma2DSP}).
When the dimensions are larger than two, the Schr\"odinger-Poisson
system is a special case of the Hartree equation and
one of the typical example of the nonlinear Schr\"odinger equation
with a nonlocal nonlinearity, and there is large amount of literature
(see \cite{CazBook} and references therein).
%For the one dimensional case, we refer the reader to \cite{DR-JMP,StrSIAMMA,StiMMMAS}.
On the other hand, the two-dimensional case is less studied.
In \cite{AN-MMAS, ZhSIAM}, \eqref{eq:oSP} is considered
with some restrictive assumptions such as a neutrality condition
%(see introduction of \cite{Ma2DSP}).
%These assumptions  are made for saying 
which confirms that the Newtonian potential \eqref{eq:NP}
does not diverge at the spatial infinity and in particular belongs to $L^2$ space.
The Poisson equation is sometimes posed with a background (or doping profile):
\[
	-\Delta P = |u|^2 - b,
\]
where $b$ is a given positive function.
Then, the neutrality condition is $\int|u|^2-b dx=0$ or equivalently $\F(|u|^2-b)(0)=0$.
When we consider the problem in dimensions less than three, this condition is useful to control $P$.
Notice that this condition excludes all nontrivial solutions when $b\equiv0$,
and that we need to remove this condition for the study of \eqref{eq:oSP}.
In \cite{Ma2DSP}, the above assumptions are removed and
the existence of a unique \textit{local} solution is proven
for data in the usual Sobolev space $H^s(\R^2)$ $(s>2)$
despite the fact that the nonlinear potential diverges at the spatial infinity.
% A key of the argument was to introduce 
% a new formula of the solution to the Poisson equation
Since \eqref{eq:NP} is not necessarily defined for $u\in H^s$ ($s>2$)
we introduced a new formula
\[
	P = -\frac{1}{2\pi} \int_{\R^2} \( \log \frac{|x-y|}{|y|}\) |u(y)|^2 dy
\]
which makes sense merely if $|u|^2\in L^p(\R^2)$ ($p\in (1,2)$).
%Another point was a reduction of \eqref{eq:oSP} to a quantum hydrodynamical system.
We underline that the local solutions given there do not have finite energy
(the energy is given in \eqref{eq:energy} below).
Our aim in this paper is to prove that there exists a time-global solution
if initial data has finite energy.

For our analysis, the following reduction is crucial:
We guess that the Newtonian potential \eqref{eq:NP} may behave like
$-\frac{1}{2\pi}\Lebn{u}2^2 \log|x| $ at the spatial infinity,
which will be the bad part of the nonlinearity,
and decompose the nonlinearity as
\[
	\l P u
	=-\frac{\l}{2\pi}\Lebn{u}2^2 (\log\Jbr{x} )u
	-\frac{\l}{2\pi} u\int_{\R^2} \( \log \frac{|x-y|}{\Jbr{x}}\) |u(y)|^2 dy,
\]
where $\Jbr{x}=(1+|x|^2)^{1/2}$.
We then 
%subtract $-\frac{\l}{2\pi}\Lebn{u}2^2 (\log\Jbr{x} )u$ from the 
%both side of \eqref{eq:oSP} to 
obtain
\[
	i\d_t u + \frac12 \Delta u + \frac{\l}{2\pi}\Lebn{u}2^2 (\log\Jbr{x} )u
	= -\frac{\l}{2\pi} u\int_{\R^2} \( \log \frac{|x-y|}{\Jbr{x}}\) |u(y)|^2 dy.
\]
It will turn out that the bad part of $P$ is correctly extracted from the original
nonlinearity and 
therefore the behavior of the ``new nonlinearity'' becomes  better.
Notice that one can also expect that $\Lebn{u}2$ is conserved because $\l$ is a real number.
Hence, putting 
\[
	m:= -\frac{\l}{2\pi}\Lebn{u_0}2^2,
\]
we reach to the equation
\begin{equation}\label{eq:SP}
	\left\{
	\begin{aligned}
	&i\d_t u + \(\frac12 \Delta  -m \log\Jbr{x} \)u
	= -\frac{\l}{2\pi} u\int_{\R^2} \( \log \frac{|x-y|}{\Jbr{x}}\) |u(y)|^2 dy, \\
	&u(0,x) = u_0(x).
	\end{aligned}
	\right.
\end{equation}
Notice that $-m \log\Jbr{x}$ is now completely independent of $u$
and that it therefore can be regarded as a linear potential.
In what follows, we work with this equation.
Observe that
if there exists a solution to \eqref{eq:SP} conserving $\Lebn{u}2$,
then it is also a solution of \eqref{eq:oSP}.

%Our analysis is based on the perturbation method.
Now, the linear part of the equation
is not $i\d_t + (1/2)\Delta$ but $i\d_t + (1/2)\Delta - m \log\Jbr{x}$.
Thus, a natural choice of the function space on which we shall work is
not the Sobolev space $H^1(\R^2)$ any more, but the following one:
\begin{equation}\label{eq:Esp}
\begin{aligned}
	&\mathcal{H}:= \{ u \in H^1(\R^2); \sqrt{\log\Jbr{x}} u \in L^2 \}, \\
	&\norm{u}_{\mathcal{H}} := \norm{u}_{H^1(\R^2)} +
	\norm{\sqrt{\log\Jbr{\cdot}} u}_{L^2(\R^2)}.
\end{aligned}
\end{equation}
If $m>0$, that is, if $\l<0$, then
the above space coincides with the form domain of 
the positive operator $-\frac{1}2\Delta + m \log\Jbr{x}$.
%We take initial data from $\mathcal{H}$.
%\subsection{Main result}
Our main result is the following:
\begin{theorem}\label{thm:main}
The problem \eqref{eq:SP} is globally well-posed in $\mathcal{H}$.
Moreover, the solution conserves $\Lebn{u(t)}2$ and the energy
\begin{equation}\label{eq:energy}
	E(t) = \frac12 \Lebn{\nabla u(t)}2^2
	-\frac{\l}{4\pi} \int_{\R^2} (\log|x-y|)|u(t,x)|^2|u(t,y)|^2 dx dy.
\end{equation}
\end{theorem}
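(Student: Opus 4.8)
The plan is to solve \eqref{eq:SP} by the perturbation method around the linear propagator $e^{-itH_0}$ of $H_0:=-\tfrac12\Delta+m\log\Jbr{x}$, to upgrade the resulting local solution to a global one using the two conservation laws, and to read off the statement about $E$ from the identity $\log|x-y|=\log\tfrac{|x-y|}{\Jbr{x}}+\log\Jbr{x}$ combined with conservation of mass.

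\textbf{Linear estimates.} Since $0\le\log\Jbr{x}\le C\Jbr{x}$ is smooth and subquadratic, $H_0$ is essentially self-adjoint on $C_c^\infty(\R^2)$ by the Faris--Lavine/Kato criterion, so it generates a unitary group on $L^2(\R^2)$. Because $\nabla\log\Jbr{x}=x\Jbr{x}^{-2}$ is bounded, commuting $\nabla$ and multiplication by $\sqrt{\log\Jbr{x}}$ through $i\partial_t v=H_0v$ and applying Gronwall's inequality gives $\norm{e^{-itH_0}u_0}_{\mathcal{H}}\le C(1+|t|)\norm{u_0}_{\mathcal{H}}$; in particular $e^{-itH_0}$ is bounded on $\mathcal{H}$, uniformly for $|t|\le1$. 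These identities are first established for $u_0\in D(H_0)$ and then extended by density of $D(H_0)$ in $\mathcal{H}$.

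\textbf{The new nonlinearity and local theory.} Write $N(u):=-\tfrac{\l}{2\pi}u\,W_u$ with $W_u(x):=\int_{\R^2}\log\tfrac{|x-y|}{\Jbr{x}}|u(y)|^2\,dy$. The decomposition announced in the introduction is engineered precisely so that $W_u$ is far more regular than the full Newtonian potential. Splitting $\log\tfrac{|x-y|}{\Jbr{x}}=\log|x-y|-\log\Jbr{x}$ and the $y$-integral over $\{|x-y|<1\}$, $\{|x-y|\ge1,\ |y|\le|x|/2\}$, and the remaining region, and using $\log\Jbr{x-y}\le\log\Jbr{x}+\log\Jbr{y}$, $\log|\cdot|\in L^p_{\mathrm{loc}}$ for every $p<\infty$, and the two-dimensional Gagliardo--Nirenberg inequality, one obtains
\[
	\Lebn{W_u}{\infty}+\Lebn{\nabla W_u}{\infty}\lesssim\Lebn{u}{2}^{2}+\Lebn{u}{2}^{c}\Lebn{\nabla u}{2}^{\eps}+\Lebn{\sqrt{\log\Jbr{\cdot}}\,u}{2}^{2}
\]
for a small $\eps\in(0,1)$ --- the fact that only a small power of $\Lebn{\nabla u}{2}$ enters is decisive below --- together with the matching Lipschitz bound $\Lebn{W_u-W_v}{\infty}\lesssim(\norm{u}_{\mathcal{H}}+\norm{v}_{\mathcal{H}})\norm{u-v}_{\mathcal{H}}$. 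Multiplying by $u$ shows that $N$ maps $\mathcal{H}$ into $\mathcal{H}$, is bounded on bounded sets, and is locally Lipschitz. A contraction argument for $u\mapsto e^{-itH_0}u_0-i\int_0^te^{-i(t-s)H_0}N(u(s))\,ds$ in a ball of $C([-T,T];\mathcal{H})$ with $T\sim\norm{u_0}_{\mathcal{H}}^{-2}$ then yields a unique maximal solution with the blow-up alternative and continuous dependence in $\mathcal{H}$; no Strichartz estimate is needed since $N$ already acts within $\mathcal{H}$.

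\textbf{Conservation laws and globalization.} Formally, $H_0$ being self-adjoint and $W_u$ real give $\tfrac{d}{dt}\Lebn{u}{2}^2=0$, so by the remark preceding the theorem $u$ also solves \eqref{eq:oSP}; then $-\Delta u+2\l Pu=2i\partial_t u$ yields $\tfrac{d}{dt}E(t)=\Re\langle\partial_t u,2i\partial_t u\rangle=0$, and finiteness of $E$ on $\mathcal{H}$ follows from the bounds used for $W_u$. These computations are made rigorous by approximating $u_0$ in $\mathcal{H}$ by data in $D(H_0)$ and passing to the limit with the continuous dependence above. The substantive point is the a priori control of $\norm{u(t)}_{\mathcal{H}}$: the energy is not coercive --- because of the logarithm neither sign of $\l$ makes $E$ sign-definite --- so conservation of $E$ alone is insufficient, and the decomposition must be used once more. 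Writing $L(t):=\int_{\R^2}\log\Jbr{x}|u(t,x)|^2\,dx$ and using conservation of mass, one has $E(t)=\tfrac12\Lebn{\nabla u}{2}^2+\tfrac m2L(t)-\tfrac{\l}{4\pi}\int_{\R^2}W_u|u|^2$, whence $\tfrac12\Lebn{\nabla u}{2}^2\lesssim|E_0|+|m|L(t)+\Lebn{u_0}{2}^2\Lebn{W_u}{\infty}$; inserting the bound for $\Lebn{W_u}{\infty}$ and absorbing the term $\Lebn{\nabla u}{2}^{\eps}$ gives $\Lebn{\nabla u(t)}{2}^2\lesssim1+L(t)$. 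On the other hand, since $N(u)=-\tfrac{\l}{2\pi}W_uu$ with $W_u$ real, the nonlinear contribution to $\tfrac{d}{dt}L$ vanishes, leaving the purely linear identity $\tfrac{d}{dt}L(t)=\Im\int_{\R^2}\overline{u}\,(\nabla\log\Jbr{x})\cdot\nabla u\,dx$, hence $|L'(t)|\lesssim\Lebn{u_0}{2}\Lebn{\nabla u(t)}{2}$. The last two displays close into a differential inequality $L'(t)\lesssim1+\sqrt{L(t)}$, so $L(t)\lesssim1+t^2$ and then $\Lebn{\nabla u(t)}{2}^2\lesssim1+t^2$; with conservation of mass this bounds $\norm{u(t)}_{\mathcal{H}}$ on every bounded interval, and the blow-up alternative yields a global solution. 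I expect this final step --- selecting the right weighted quantity $L$ and checking that both the energy identity and the monotonicity estimate close with no smallness hypothesis --- to be the main obstacle.
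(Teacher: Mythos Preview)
Your argument is essentially correct and reaches the same conclusion as the paper, but by a genuinely different and somewhat more streamlined route. Two points of comparison are worth recording.

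\emph{Local theory.} The paper works in a Strichartz-type space $L^\infty_T\mathcal{H}\cap L^{q_0}_T W^{1,r_0}$ and uses the kernel decomposition of Lemma~\ref{lem:estK}, which only places $W$ in $L^\infty_yL^{r_0'}_x$; the resulting estimate $\Lebn{Pu}{2}\lesssim(\Lebn{u}{2}+\Lebn{u}{r_0})\norm{\sqrt{1+\log\Jbr{\cdot}}\,u}_{L^2}^2$ then forces the use of Strichartz to close. Your observation that in fact $W_u\in L^\infty$ (with the extra input $\Lebn{\nabla u}{2}^\eps$ coming from Gagliardo--Nirenberg on the near-diagonal piece) lets you dispense with Strichartz entirely and run a plain contraction in $C_T\mathcal{H}$. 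This is a legitimate simplification for the present problem; the paper's Strichartz machinery is, however, what makes the extension to the power nonlinearity in Theorem~\ref{thm:withpower} immediate. One small inaccuracy: your displayed bound asserts the small power $\Lebn{\nabla u}{2}^\eps$ also for $\Lebn{\nabla W_u}{\infty}$, but the singular convolution $|x|^{-1}*|u|^2$ in $\nabla W_u$ cannot be controlled with an exponent below $1$ on $\Lebn{\nabla u}{2}$. This does not affect your argument, since only $\Lebn{W_u}{\infty}$ is used in the globalization step and any finite power suffices for the local Lipschitz estimate.

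\emph{Globalization.} The paper splits into the cases $\l<0$ and $\l>0$, handling the second by choosing the $\eps$ in Lemma~\ref{lem:estK} small relative to a fixed time horizon $T$ and obtaining a $T$-dependent bound \eqref{eq:l+case}. Your approach---rewriting $E$ as $\tfrac12\Lebn{\nabla u}{2}^2+\tfrac{m}{2}L-\tfrac{\l}{4\pi}\int W_u|u|^2$, deducing $\Lebn{\nabla u}{2}^2\lesssim1+L$, and then closing against the virial-type identity $|L'|\lesssim\Lebn{\nabla u}{2}$---treats both signs of $\l$ at once via the ODE $L'\lesssim\sqrt{1+L}$. The virial identity you use is exactly the one appearing in the paper's proof of Lemma~\ref{lem:BA}; the difference is that you feed it directly into a differential inequality rather than into a time-localized bootstrap. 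Both arguments give polynomial-in-$t$ growth of $\norm{u(t)}_{\mathcal{H}}$ and hence global existence by the blow-up alternative.
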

\begin{corollary}
The Problem \eqref{eq:oSP} is globally well-posed in $\mathcal{H}$.
\end{corollary}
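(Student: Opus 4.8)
The plan is to treat \eqref{eq:SP} as a semilinear equation with linear part $H:=-\tfrac12\Delta+m\log\Jbr{x}$ and nonlinearity $R(u):=-\tfrac{\l}{2\pi}u\,V_u$, where $V_u(x):=\int_{\R^2}\log\frac{|x-y|}{\Jbr{x}}|u(y)|^2\,dy$, and to run the Duhamel perturbation scheme in $\mathcal H$, followed by a globalisation argument based on the conservation laws. Since $m\log\Jbr{x}$ is smooth, subquadratic, and has bounded derivatives of order $\ge1$, $H$ is essentially self-adjoint on $C_c^\I(\R^2)$ and $e^{-itH}$ is a unitary group on $L^2$; differentiating $\Lebn{\nabla e^{-itH}\phi}2^2$ and $\Lebn{\sqrt{\log\Jbr{\cdot}}\,e^{-itH}\phi}2^2$ in time and using $\|\nabla\log\Jbr{\cdot}\|_{L^\I}<\I$ shows these grow at most polynomially, so $e^{-itH}$ is bounded on $\mathcal H$ uniformly on bounded time intervals, independently of the sign of $m$; this is the sense in which $\mathcal H$ is the natural space.

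\textbf{Local theory.} The point of replacing $|y|$ by $\Jbr{x}$ is that it cancels the leading logarithm of the Newtonian potential at spatial infinity, which one can quantify by a pointwise bound such as
\[
	\Bigl|\log\tfrac{|x-y|}{\Jbr{x}}\Bigr|\le \mathbf 1_{\{|x-y|<1\}}\,\bigl|\log|x-y|\bigr|+\log\Jbr{y}+C .
\]
Integrating this against $|u(y)|^2$, absorbing the local singularity via the two-dimensional embedding $H^1\hookrightarrow L^p$ ($p<\I$), and arguing similarly for $\nabla V_u(x)=\int_{\R^2}\bigl(\tfrac{x-y}{|x-y|^2}-\tfrac{x}{\Jbr{x}^2}\bigr)|u(y)|^2\,dy$, one obtains $\|V_u\|_{L^\I}\lesssim\norm{u}_{\mathcal H}^2$, $\|\nabla V_u\|_{L^\I}\lesssim\Sobn{u}1^2$, and their bilinear counterparts. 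Hence $R:\mathcal H\to\mathcal H$ is Lipschitz on bounded sets, and a contraction argument for $\Phi(u)(t)=e^{-itH}u_0-i\int_0^te^{-i(t-s)H}R(u(s))\,ds$ on a ball of $C([-T,T];\mathcal H)$ gives local well-posedness with $T=T(\norm{u_0}_{\mathcal H})$ and the blow-up alternative.

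\textbf{Conservation laws.} Since $\p_t u=-iHu-iR(u)$ holds in $\mathcal H^*$ and $\Jbr{Hu,u}$, $\Jbr{R(u),u}=-\tfrac{\l}{2\pi}\int V_u|u|^2$ are real (the kernel $\log\frac{|x-y|}{\Jbr{x}}$ being real-valued), pairing with $u$ gives $\tfrac{d}{dt}\Lebn{u}2^2=0$. Therefore $m=-\tfrac{\l}{2\pi}\Lebn{u(t)}2^2$ for all $t$, so $m\log\Jbr{x}\,u+R(u)=\l Pu$ with $P$ as in \eqref{eq:NP}; that is, the $\mathcal H$-solution of \eqref{eq:SP} solves \eqref{eq:oSP}, which carries the Hamiltonian \eqref{eq:energy}. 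Conservation of $E$ follows by a standard regularisation argument: it holds for smooth compactly supported data, for which the solution is regular enough to justify $\tfrac{d}{dt}E=0$ directly (using $\Delta\log|x-y|=2\pi\de(x-y)$ to identify $\Delta V_u$ when propagating regularity), and it passes to general $u_0\in\mathcal H$ by approximation, using continuity of $E$ on $\mathcal H$ and continuous dependence.

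\textbf{Globalisation.} By the blow-up alternative it suffices to bound $\norm{u(t)}_{\mathcal H}$ on bounded intervals; with $M:=\Lebn{u(t)}2$ conserved this means controlling $\Lebn{\nabla u(t)}2$ and $g(t):=\Lebn{\sqrt{\log\Jbr{\cdot}}\,u(t)}2^2$. Differentiating $g$ along \eqref{eq:SP}, the potential and cubic terms contribute purely imaginary quantities, leaving only a commutator term, so $g'(t)\le\|\nabla\log\Jbr{\cdot}\|_{L^\I}M\Lebn{\nabla u(t)}2$. Conservation of $E$ then bounds $\Lebn{\nabla u(t)}2$: splitting $\log|x-y|=\log_+|x-y|-\log_-|x-y|$, the negative part obeys $\int\!\!\int\log_-|x-y||u(x)|^2|u(y)|^2\lesssim M^2\Lebn{u}4^2\le\eps\Lebn{\nabla u}2^2+C_\eps$ by Young's and the two-dimensional Gagliardo--Nirenberg inequalities, whereas $\log_+|x-y|\le\log\Jbr{x-y}\le C+\log\Jbr{x}+\log\Jbr{y}$ gives $\int\!\!\int\log_+|x-y||u(x)|^2|u(y)|^2\le CM^4+2M^2g(t)$; hence $\Lebn{\nabla u(t)}2^2\le C(1+g(t))$ if $\l>0$ and $\Lebn{\nabla u(t)}2^2\le C$ uniformly in $t$ if $\l<0$. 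Feeding this back into the bound for $g$ and applying Gronwall yields (at most) quadratic-in-$t$ growth of $g$, hence of $\norm{u(t)}_{\mathcal H}$, ruling out finite-time blow-up; global well-posedness follows, and the Corollary is immediate by the equivalence of \eqref{eq:SP} and \eqref{eq:oSP} in $\mathcal H$. I expect this last step to be the main obstacle: $E$ is \emph{not} coercive over $\mathcal H$ --- it does not see the logarithmic weight --- so no conserved quantity by itself controls $\norm{u(t)}_{\mathcal H}$, and one is forced to exploit the near-cancellation between the linear logarithmic potential and the nonlinearity through the coupled pair of a priori inequalities for $g$ and $\Lebn{\nabla u}2$; a minor difficulty is choosing a regularisation compatible with the logarithmic weight so as to justify the energy identity.
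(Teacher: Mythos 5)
Your proposal is correct and follows the paper's overall architecture --- work with the reduced equation \eqref{eq:SP}, prove local well-posedness in $\mathcal{H}$, use $L^2$-conservation to return to \eqref{eq:oSP}, and globalise via the energy together with the growth estimate for the logarithmic momentum --- but it differs from the paper in two worthwhile respects. First, your local theory is an energy-space contraction: you exploit that the nonlinearity loses no derivatives and that the kernel's singularity is only logarithmic (hence $L^p_{\mathrm{loc}}$), so $\|V_u\|_{L^\I}+\|\nabla V_u\|_{L^\I}\lesssim \norm{u}_{\mathcal H}^2$ and the map $u\mapsto uV_u$ is locally Lipschitz on $\mathcal H$; combined with the local-in-time boundedness of $e^{itA}$ on $\mathcal H$ (your commutator/Gronwall computation, which is the same calculation as the paper's Lemma \ref{lem:commutatorA} and Lemma \ref{lem:BA}), a fixed point in $C([-T,T];\mathcal H)$ closes. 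The paper instead invokes Yajima's dispersive estimate for the sub-quadratic potential $m\log\Jbr{x}$ to get Strichartz estimates and contracts in mixed $L^{q_0}_TW^{1,r_0}$ spaces; your route is more elementary and even yields unconditional uniqueness in $C([-T,T];\mathcal H)$, while the paper's machinery is what one would need for rougher data or for the $1<p<2$ power nonlinearity. Second, for the a priori bound when $\l>0$ you use the crude splitting $\log_+|x-y|\le C+\log\Jbr{x}+\log\Jbr{y}$ and close a Gronwall loop between $g(t)=\Lebn{\sqrt{\log\Jbr{\cdot}}\,u(t)}2^2$ and $\Lebn{\nabla u(t)}2$, obtaining polynomial growth; the paper reaches the same conclusion via its kernel estimate (Lemma \ref{lem:estK}) and an $\eps$-bootstrap on $[0,2T]$. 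These are essentially the same cancellation exploited in two packagings, and you correctly identify the crux: the energy alone is not coercive on $\mathcal H$, and it is the coupling with the commutator bound on $g'$ that saves the day. The passage from the global $\mathcal H$-theory of \eqref{eq:SP} to the Corollary is then exactly as in the paper: $L^2$-conservation makes the extracted linear potential $-m\log\Jbr{x}$ recombine with the remainder into $\l Pu$, and conversely any $\mathcal H$-solution of \eqref{eq:oSP} conserves $\Lebn{u}2$ and hence solves \eqref{eq:SP}, so uniqueness transfers as well.
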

\begin{remark}
Let $u\in C(\R;\mathcal{H})$
be a solution of \eqref{eq:SP} (and of \eqref{eq:oSP}) given in
Theorem \ref{thm:main}.
Then, $v:= u \exp (-i\frac{\l}{2\pi}\int_0^t \|\sqrt{\log|\cdot|}u(s)\|_{L^2}^2ds)$ solves
\begin{equation}\label{eq:SPp}
	\left\{
	\begin{aligned}
	&i\d_t v + \frac12 \Delta v
	= -\frac{\l}{2\pi} v\int_{\R^2} \( \log \frac{|x-y|}{|y|}\) |v(y)|^2 dy, \\
	&v(0,x) = u_0(x).
	\end{aligned}
	\right.
\end{equation}
Notice that the nonlinearity of \eqref{eq:SPp} makes sense without
the momentum condition $\sqrt{\log|\cdot|}v \in L^2$. 
This observation explains why existence of a time-local solution can be proven
by assuming only $u_0 \in H^s(\R^2)$ ($s>1$) in \cite{Ma2DSP}.
\end{remark}
\subsection{Consequent results}
Our argument is also applicable to \eqref{eq:oSP}
involving a power type nonlinearity:
\begin{equation}\label{eq:oSPp}
	\left\{
	\begin{aligned}
	&i\d_t u + \frac12 \Delta u = \l P u + \eta |u|^{p-1}u, \quad (t,x) \in \R^{1+2}, \\
	&-\Delta P =  |u|^2, \\
	&u(0,x) = u_0(x),
	\end{aligned}
	\right.
\end{equation}
where $\eta$ is a real number and $p\ge2$.
\begin{theorem}\label{thm:withpower}
The problem \eqref{eq:oSPp} is globally well-posed in $\mathcal{H}$ if either 
one of the following conditions is satisfied:
\begin{enumerate}
\item $\eta\ge0$, $\l \in \R$ and $p\ge 2$; 
\item $\eta<0$, $\l \in \R$, and $2\le p<3$;
\item $\eta<0$, $\l>0 $, $p=3$, and $\norm{u_0}_{\mathcal{H}}$ is small;
\item $\eta<0$, $\l<0$, $p\ge3$, and $\norm{u_0}_{\mathcal{H}}$ is small.
\end{enumerate}
Moreover, the solution conserves $\Lebn{u(t)}2$ and the energy
\begin{equation}\label{eq:Ep}
\begin{aligned}
	E_p(t):={}&\frac12 \Lebn{\nabla u(t)}2^2
	-\frac{\l}{4\pi} \int_{\R^2} (\log|x-y|)|u(t,x)|^2|u(t,y)|^2 dx dy \\
	&{} + \frac{\eta}{p+1}\Lebn{u(t)}{p+1}^{p+1}.
\end{aligned}
\end{equation}
\end{theorem}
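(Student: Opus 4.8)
The plan is to reduce Theorem~\ref{thm:withpower} to a variant of Theorem~\ref{thm:main} by treating the extra term $\eta|u|^{p-1}u$ as an additional, \emph{local} nonlinearity and re-running the same perturbative scheme. More precisely, one keeps the decomposition of the Poisson term exactly as in \eqref{eq:SP}, so that the equation becomes
\[
	i\d_t u + \(\tfrac12 \Delta - m\log\Jbr{x}\)u
	= -\frac{\l}{2\pi} u\int_{\R^2}\(\log\frac{|x-y|}{\Jbr{x}}\)|u(y)|^2\,dy
	+ \eta|u|^{p-1}u,
\]
and one works in the same space $\mathcal{H}$ with the same linear propagator $e^{it(\frac12\Delta - m\log\Jbr{x})}$. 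The first step is therefore \emph{local well-posedness}: one writes the Duhamel formula, and estimates the two source terms in the Strichartz spaces adapted to the logarithmic-potential propagator. The ``good remainder'' is handled exactly as in the proof of Theorem~\ref{thm:main}; for the power term one uses that $2\le p$ together with the Gagliardo--Nirenberg inequality in $\R^2$ (note $H^1\hookrightarrow L^q$ for all $q<\infty$) and the fractional Leibniz/chain rule to close a contraction in $C([0,T];\mathcal{H})$ for $T$ small depending on $\norm{u_0}_{\mathcal{H}}$. This part is essentially identical to the case $\eta=0$ and is not where the conditions (1)--(4) enter.

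The second and decisive step is the \emph{a priori bound} that upgrades the local solution to a global one, and this is where the four alternatives appear. Conservation of $\Lebn{u(t)}2$ follows from the $L^2$-balance law (the right-hand side is gauge-invariant, so the mass argument of Theorem~\ref{thm:main} goes through verbatim), which in particular keeps the constant $m$ genuinely constant. One then derives conservation of $E_p(t)$ in \eqref{eq:Ep} by the usual pairing of the equation with $\d_t\bar u$, justified first for regular data and then by the approximation/density argument already used for \eqref{eq:energy}. The task is then to show that control of $E_p(t)$ and $\Lebn{u(t)}2$ controls $\norm{u(t)}_{\mathcal H}$, i.e.\ bounds $\Lebn{\nabla u(t)}2$ and $\norm{\sqrt{\log\Jbr{\cdot}}u(t)}_{L^2}$. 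The logarithmic weight is the easy direction: using the elementary bound $\log|x-y|\le \log\Jbr{x}+\log\Jbr{y}+C$ one gets, up to mass-dependent constants,
\[
	-\frac{\l}{4\pi}\int (\log|x-y|)|u(x)|^2|u(y)|^2\,dxdy
	\ge -C\Lebn{u}2^2\norm{\sqrt{\log\Jbr{\cdot}}u}_{L^2}^2 - C',
\]
and the converse inequality $\log|x-y|\ge \log\Jbr{x}-C\log\Jbr{y}-C$ type estimate gives the matching upper control; these are precisely the logarithmic-convolution estimates already established in \cite{Ma2DSP} and in the proof of Theorem~\ref{thm:main}.

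The genuine obstacle is the interplay, for $\eta<0$, between the (possibly sign-indefinite) logarithmic energy and the defocusing/focusing power term in controlling $\Lebn{\nabla u(t)}2$. When $\eta\ge0$ (case (1)) the term $\frac{\eta}{p+1}\Lebn{u}{p+1}^{p+1}$ has the good sign and only the Poisson term must be dominated, which the logarithmic estimates above accomplish for every $\l\in\R$ and every $p\ge2$; the bound is unconditional. When $\eta<0$ one must absorb $-|\eta|\Lebn{u}{p+1}^{p+1}$ into $\frac12\Lebn{\nabla u}2^2$ via Gagliardo--Nirenberg, $\Lebn{u}{p+1}^{p+1}\lesssim \Lebn{\nabla u}2^{p-1}\Lebn{u}2^{2}$ in $\R^2$; this is subcritical precisely when $p-1<2$, i.e.\ $p<3$, giving case (2) for all $\l$; the endpoint $p=3$ is energy-critical for the cubic term and one needs smallness of $\norm{u_0}_{\mathcal H}$ (hence of the conserved mass) to win the competition, and if moreover $\l<0$ the Poisson term is itself coercive (it is then a positive quadratic form, being the form domain statement after \eqref{eq:Esp}) so it helps rather than hurts and one can push to all $p\ge3$ under a smallness assumption --- these are cases (3) and (4). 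Concretely, for (3) and (4) one sets up a continuity (bootstrap) argument: assuming $\norm{u(t)}_{\mathcal H}\le 2\norm{u_0}_{\mathcal H}$ on a maximal interval, one plugs this into the Gagliardo--Nirenberg and logarithmic estimates applied to $E_p$, and smallness of $\norm{u_0}_{\mathcal H}$ forces the strict improvement $\norm{u(t)}_{\mathcal H}\le \tfrac32\norm{u_0}_{\mathcal H}$, so the interval cannot be maximal and the solution is global. The mechanical part --- verifying the Strichartz estimates for the propagator with potential $-m\log\Jbr{x}$ carries over to the power term, and checking that the energy identity is legitimate --- is routine once Theorem~\ref{thm:main} is in hand; the only thing that requires care is tracking the exact exponent thresholds in Gagliardo--Nirenberg against the required coercivity, which is exactly what produces the list (1)--(4).
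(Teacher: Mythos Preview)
Your overall architecture---local well-posedness via the Strichartz estimates for the propagator $e^{itA}$, conservation of mass and of $E_p$, and then a case analysis on the signs of $\eta,\lambda$ using Gagliardo--Nirenberg---matches the paper's. The genuine gap is in how you propose to control the weighted part $\Lebn{\sqrt{\log\Jbr{\cdot}}\,u(t)}2$ of the $\mathcal{H}$-norm. You claim this is ``the easy direction'' and appeal to a two-sided bound on $\log|x-y|$, but the lower bound you write, a ``$\log|x-y|\ge \log\Jbr{x}-C\log\Jbr{y}-C$ type estimate'', is false (take $x=y$), and for $\lambda>0$ the Poisson contribution to $E_p$ has the wrong sign to yield any positive weighted term. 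The paper does \emph{not} extract the weighted norm from the energy at all. Instead, exactly as in Lemma~\ref{lem:BA}, one computes
\[
\frac{d}{dt}\Lebn{\sqrt{\log\Jbr{x}}\,u(t)}2^2 \;=\; \Im\int \frac{x}{1+|x|^2}\cdot\nabla u(t)\,\overline{u(t)}\,dx,
\]
bounds this by $\Lebn{\nabla u(t)}2\Lebn{u_0}2$, and concludes that finite-time blow-up can occur only through $\Lebn{\nabla u(t)}2$; the weighted norm is \emph{allowed to grow linearly in $t$}. The entire a priori step therefore reduces to controlling $\Lebn{\nabla u(t)}2$ alone.

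This distinction is exactly what decides cases (3) and (4). For $\lambda<0$ (case (4)), the argument leading to \eqref{eq:l-case} gives, after adding the power term,
\[
\Lebn{\nabla u(t)}2^2 \le 2E_p(0) + C\Lebn{\nabla u(t)}2 + C\Lebn{u_0}2^2\Lebn{\nabla u(t)}2^{p-1},
\]
with constants \emph{independent of $t$}; a continuity argument on $\Lebn{\nabla u}2$ then closes for any $p\ge3$ under smallness. For $\lambda>0$ (case (3)), one must first feed the linear-in-$t$ growth of the weighted norm into the upper bound on the Poisson energy (via Lemma~\ref{lem:estK}), obtaining an inequality of type \eqref{eq:l+case} whose constants depend on $T$; this closes for $p=3$ under smallness of $\Lebn{u_0}2$, but the $T$-dependence defeats the argument once $p-1>2$. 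Your bootstrap asserting $\norm{u(t)}_{\mathcal H}\le 2\norm{u_0}_{\mathcal H}$ for all $t$ is a time-uniform bound that is not available when $\lambda>0$, and your explanation that the Poisson term for $\lambda<0$ is ``coercive'' and thereby absorbs the supercritical power term misidentifies the mechanism: what $\lambda<0$ buys is time-independence of the constants in the $\Lebn{\nabla u}2$ inequality, not additional coercivity against $|u|^{p+1}$.
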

The proof is done with a straight-forward modification (see Section \ref{sec:power}).
The case where $p=3$ is known as the $L^2$-critical case.
Since the $\mathcal{H}$-norm contains derivative, 
it seems difficult to treat the case  $1< p <2$.
Nevertheless, we can show global well-posed in a slightly smaller function space
$\mathcal{H}^{1,2}:= \{ u \in H^1(\R^2); u\log\Jbr{x} \in L^2 \}$.

\begin{theorem}\label{thm:lowpower}
Suppose $1<p<2$. For $\eta,\l \in \R$
The problem \eqref{eq:oSPp} is globally well-posed in the space
$\mathcal{H}^{1,2}$.
Moreover, the solution conserves $\Lebn{u(t)}2$ and the energy
$E_p(t)$ given in \eqref{eq:Ep}.
\end{theorem}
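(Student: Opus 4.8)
The plan is to follow the scheme of Theorems~\ref{thm:main} and~\ref{thm:withpower}, working in $\mathcal{H}^{1,2}$ rather than $\mathcal{H}$ and treating the low power by a contraction at $L^2$-regularity level rather than at energy level. First I would perform the same reduction as in the introduction: with $m=-\frac{\l}{2\pi}\Lebn{u_0}{2}^2$ and
\[
	N(u):=-\frac{\l}{2\pi}\,u\int_{\R^2}\Big(\log\frac{|x-y|}{\Jbr{x}}\Big)|u(y)|^2\,dy,
\]
the system \eqref{eq:oSPp} becomes $i\d_t u+(\tfrac12\Delta-m\log\Jbr{x})u=N(u)+\eta|u|^{p-1}u$, with the Duhamel formula $u(t)=U(t)u_0-i\int_0^tU(t-s)\big(N(u(s))+\eta|u(s)|^{p-1}u(s)\big)\,ds$ for $U(t)=e^{-it(-\frac12\Delta+m\log\Jbr{x})}$. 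From the proof of Theorem~\ref{thm:main} I would import: (i) that $U(t)$ is a well-defined unitary group satisfying the Strichartz estimates used there, including the weighted ones --- for the latter the point is that $\log\Jbr{x}\,u$ solves the same Schr\"odinger equation up to the first-order commutator term $[\log\Jbr{x},\tfrac12\Delta]u$, whose coefficients $\nabla\log\Jbr{x}$ and $\Delta\log\Jbr{x}$ are bounded, so that it inherits those bounds; and (ii) that multiplication by the Newtonian remainder $\log\frac{|\cdot|}{\Jbr{\cdot}}\ast|u|^2$ and by its gradient is bounded with norm controlled by $\norm{u}_{\mathcal{H}^{1,2}}$, so that $N$ maps bounded subsets of $\mathcal{H}^{1,2}$ Lipschitz-continuously into the relevant (Strichartz-dual) space --- this uses $\mathcal{H}^{1,2}\hookrightarrow\mathcal{H}$ and the estimates already proven for $\mathcal{H}$, the sign of $m$ being irrelevant since $\log\Jbr{x}$ is sub-quadratic.

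The genuinely new point is the term $\eta|u|^{p-1}u$ with $1<p<2$, and the key observation is that this power is $L^2$-\emph{subcritical} in two dimensions, $p<1+\frac{4}{2}=3$. I would therefore \emph{not} differentiate the nonlinearity: the local theory is run by contraction in an $L^2$-based Strichartz space, using for the difference only the elementary bound $\big||u|^{p-1}u-|v|^{p-1}v\big|\le C(|u|^{p-1}+|v|^{p-1})|u-v|$ together with H\"older and the embedding $H^1(\R^2)\hookrightarrow L^q$ $(q<\I)$; this bypasses the singular factor $|u|^{p-2}$ that obstructs an $H^1$-level contraction when $p<2$. The weight is carried through \emph{pointwise}, using that $|u|^{p-1}u$ is local: $\log\Jbr{x}\,|u|^{p-1}u=|u|^{p-1}\,\big(\log\Jbr{x}\,u\big)$, estimated in the dual Strichartz norm by pairing $\log\Jbr{x}\,u$ --- controlled in a weighted Strichartz norm via (i) --- with high Lebesgue norms of $u$. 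This is exactly where the \emph{full} weight of $\mathcal{H}^{1,2}$, rather than the square root in $\mathcal{H}$, enters: $\log\Jbr{x}$ is smooth and comparable to the linear potential $m\log\Jbr{x}$, so $\log\Jbr{x}\,u$ itself propagates with good Strichartz bounds, whereas $\sqrt{\log\Jbr{x}}$ interacts less cleanly with $-\tfrac12\Delta$ (its Laplacian is singular at the origin) and the low-power estimate then fails to close. Combining these bounds with (i)--(ii) gives, by the standard contraction on an interval $[0,T]$ with $T=T(\norm{u_0}_{\mathcal{H}^{1,2}})$, a unique local solution $u\in C([0,T];\mathcal{H}^{1,2})$ depending continuously on the data --- no smallness or sign condition being needed, since $p<3$ is comfortably $L^2$-subcritical and $N$ is controlled for either sign of $\l$ --- while persistence of the $H^1$ component comes not from the contraction but from an a priori Strichartz estimate, for which one needs only $|\nabla(|u|^{p-1}u)|\le C|u|^{p-1}|\nabla u|$ (valid as $p>1$) and the fractional chain rule.

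For the global statement I would first record conservation of $\Lebn{u(t)}{2}$ --- built into the reduction, $\l$ being real --- and of $E_p$ from \eqref{eq:Ep}, obtained by the usual energy identity on regularized solutions and passed to the limit. Since $1<p<2$ the problem is both $L^2$- and energy-subcritical: using $\log|x-y|\le\log2+\log\Jbr{x}+\log\Jbr{y}$ for the positive part of the Poisson term, a Young-type bound for the logarithmic singularity at the origin, Cauchy--Schwarz ($\int\log\Jbr{x}\,|u|^2\le\Lebn{u}{2}\,\Lebn{\log\Jbr{\cdot}\,u}{2}$), and Gagliardo--Nirenberg ($\Lebn{u}{p+1}^{p+1}\le C\Lebn{u}{2}^2\Lebn{\nabla u}{2}^{p-1}$, with $p-1<2$), conservation of $E_p$ and of mass yields $\Lebn{\nabla u(t)}{2}^2\le C\big(1+\Lebn{\log\Jbr{\cdot}\,u(t)}{2}\big)$. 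The weighted norm is then bounded directly from the equation: differentiating $\Lebn{\log\Jbr{\cdot}\,u(t)}{2}^2$ in time, the potential $m\log\Jbr{x}$ and the real multipliers arising from $N(u)$ and $\eta|u|^{p-1}u$ contribute nothing to the imaginary part, and the kinetic term leaves only $2\,\Im\int(\log\Jbr{x})(\nabla\log\Jbr{x})\cdot\bar u\,\nabla u$, bounded by $C\Lebn{u}{2}\Lebn{\nabla u}{2}$ since $(\log\Jbr{x})\nabla\log\Jbr{x}\in L^\I$. Together with the previous inequality, Gronwall gives at most exponential growth of $\Lebn{\log\Jbr{\cdot}\,u(t)}{2}$ on any finite interval; hence $\norm{u(t)}_{\mathcal{H}^{1,2}}$ cannot blow up in finite time and the local solution extends to all $t\in\R$.

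The step I expect to be the main obstacle is making the weighted linear estimates for $U(t)=e^{-it(-\frac12\Delta+m\log\Jbr{x})}$ sharp enough that the $L^2$-based contraction still closes with the weight $\log\Jbr{x}$ attached to the low power --- i.e.\ controlling $\log\Jbr{x}\,|u|^{p-1}u$ in the dual Strichartz norm by the weighted Strichartz norm of $u$ --- while keeping the treatment of $N(u)$ unchanged; once that linear input is available, the remaining modifications of the $p\ge2$ argument are routine.
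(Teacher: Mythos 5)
Your overall strategy coincides with the paper's: reduce to the equation with linear potential $m\log\Jbr{x}$, run the local theory as a contraction on a ball of $\mathcal{H}^{1,2}_T$ (strong norm including $W^{1,r_0}$ and weighted Strichartz components) but with respect to the \emph{weak} metric $d(f,g)=\norm{f-g}_{L^\I_TL^2}+\norm{f-g}_{L^{q_0}_TL^{r_0}}$, so that the difference of $|u|^{p-1}u$ is handled by $\bigl||u_1|^{p-1}u_1-|u_2|^{p-1}u_2\bigr|\le C(|u_1|^{p-1}+|u_2|^{p-1})|u_1-u_2|$ without differentiating; then prove a blow-up alternative via $\bigl|\frac{d}{dt}\Lebn{\log\Jbr{\cdot}u(t)}2^2\bigr|\le C\Lebn{\nabla u(t)}2$ and close with the a priori energy bound as in the case $2\le p<3$. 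Also, the "main obstacle" you anticipate is a non-issue: the weighted Strichartz bounds for $W=\log\Jbr{x}$ follow at once from Lemma \ref{lem:commutatorA}, since $\nabla\log\Jbr{x}$ and $\Delta\log\Jbr{x}$ are bounded.

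The one place where your write-up would not close as stated is the claim that the treatment of $N(u)$ can be kept "unchanged" from the $\mathcal{H}$ theory. The Lipschitz estimates for the Hartree term proved in Lemma \ref{lem:LWP} are Lipschitz with respect to the full $\mathcal{H}_T$ norm, i.e.\ they control $N(u_1)-N(u_2)$ by norms that include $\sqrt{\log\Jbr{\cdot}}\,(u_1-u_2)$ --- which your metric $d$ does not see. The contraction therefore requires a \emph{new} difference estimate for $N$ in which the entire weight lands on $u_1,u_2$ and none on $u_1-u_2$; concretely, after splitting the kernel as $\log\frac{|x-y|}{\Jbr{x}}=(1+\log\Jbr{y})K(x,y)$, the term involving $|u_1|^2-|u_2|^2$ is estimated through
\[
\bigl\|(|u_1|^2-|u_2|^2)\log\Jbr{\cdot}\bigr\|_{L^1}\le\bigl(\Lebn{u_1\log\Jbr{\cdot}}2+\Lebn{u_2\log\Jbr{\cdot}}2\bigr)\Lebn{u_1-u_2}2 .
\]
This is exactly why the full weight $\log\Jbr{x}$ (the space $\mathcal{H}^{1,2}$) is needed: with only $\sqrt{\log\Jbr{\cdot}}\,u_i\in L^2$ one would be forced to place a factor $\sqrt{\log\Jbr{\cdot}}$ on the difference. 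Your explanation for the smaller space --- that the weight must pass through the local power nonlinearity and that $\sqrt{\log\Jbr{x}}$ behaves badly under the flow --- is off target (the power term is local so either weight factors through it, and $\sqrt{1+\log\Jbr{x}}$ has bounded gradient and Laplacian, so Lemma \ref{lem:commutatorA} applies to it too). Once this Hartree difference estimate is supplied, the rest of your argument, including the Gronwall loop for global existence, is sound and matches the paper.
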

\smallbreak

We can also handle the one-dimensional problem
\begin{equation}\label{eq:SP1}
	\left\{
	\begin{aligned}
	&i\d_t u + \frac12 \d_{xx} u = -\frac{\l}2 (|x|*|u|^2)u + \eta |u|^{p-1}u
, \quad (t,x) \in \R^{1+1}, \\
	&u(0,x) = u_0(x),
	\end{aligned}
	\right.
\end{equation}
where $\l,\eta \in \R$ and $p\ge2$.
The one dimensional problem was studied in \cite{DR-JMP,StrSIAMMA,StiMMMAS}.
The global well-posedness of \eqref{eq:SP1} was shown 
in the space $\{f \in H^1(\R); |x|f\in L^2(\R)\}$ in \cite{StiMMMAS},
and in the space $\{f \in H^1(\R); \sqrt{|x|}f\in L^2(\R)\}$
with a presence of background in \cite{DR-JMP}, provided
$\l>0$ and data is small relative to the background.
We can prove the global well-posedness result of \eqref{eq:SP1}
including these results.
\begin{theorem}\label{thm:1D}
The problem \eqref{eq:SP1} is globally well-posed in 
$\{f \in H^1(\R); \sqrt{|x|}f\in L^2(\R)\}$ if $\l\in \R$ and either
one of the following conditions is satisfied:
\begin{enumerate}
\item $\eta\ge0$, $\l\in\R$, and $p\ge 2$;
\item $\eta<0$, $\l\in\R$, and $2\le p<5$;
\item $\eta<0$, $\l >0$, $p=5$, and $\norm{u_0}_{H^1}+\|{\sqrt{|\cdot|}u_0}\|_{L^2}$ is small;
\item $\eta<0$, $\l <0$, $p\ge5$, and $\norm{u_0}_{H^1}+\|{\sqrt{|\cdot|}u_0}\|_{L^2}$ is small.
\end{enumerate}
The solution conserves $\Lebn{u}2$ and the energy
\begin{equation}\label{eq:1DE}
	\widetilde{E}(t):=\frac12\norm{\d_x u}_{L^2(\R)}^2 - \frac{\l}2 \iint_{\R^2} |x-y| |u(x)|^2|u(y)|^2 dxdy
	+\frac{\eta}{p+1}\norm{u}_{L^{p+1}(\R)}^{p+1}.
\end{equation}
\end{theorem}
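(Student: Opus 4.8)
The plan is to run the scheme of Theorem~\ref{thm:main} with the kernel $\abs x$ in place of the logarithm. Write $\Sigma:=\{f\in H^1(\R):\sqrt{\abs\cdot}\,f\in L^2(\R)\}$ with $\norm f_\Sigma:=\Sobn f1+\Lebn{\sqrt{\abs\cdot}\,f}2$, and note $\Lebn{\Jbr\cdot^{1/2}f}2\simeq\norm f_\Sigma$. Anticipating that $\frac\l2(\abs x*\abs u^2)$ behaves like $\frac\l2\Lebn u2^2\Jbr x$ at spatial infinity, I set $\ti m:=-\frac\l2\Lebn{u_0}2^2$ and pass to the reduced equation
\[
i\d_t u+\Big(\frac12\d_{xx}-\ti m\Jbr x\Big)u=-\frac\l2\,u\int_\R\Big(\abs{x-y}-\Jbr x\Big)\abs{u(y)}^2\,dy+\eta\abs u^{p-1}u ,
\]
in which $\ti m\Jbr x$ is now a \emph{fixed} linear potential; a solution of this equation conserving $\Lebn u2$ solves \eqref{eq:SP1} as well. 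The gain over the original nonlinearity is that $\abs{\,\abs{x-y}-\Jbr x\,}\le 1+\abs y\lesssim\Jbr y$, so for each $t$ the integral term and its $\d_x$-derivative are bounded in $x$ by $C\norm u_\Sigma^2$; hence $u\mapsto-\frac\l2 u\int_\R(\abs{x-y}-\Jbr x)\abs{u(y)}^2dy$ is locally Lipschitz from $\Sigma$ to $\Sigma$ with cubic bounds (the weight $\sqrt{\abs\cdot}$ hitting only the outer factor $u$), and the power term is treated as in the usual $H^1(\R)$ theory, using $\sqrt{\abs x}\,\abs u^{p-1}u=\abs u^{p-1}(\sqrt{\abs x}\,u)$.

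First I would set up the linear theory for $H:=-\frac12\d_{xx}+\ti m\Jbr x$. Since $\Jbr x$ is smooth with subquadratic growth, $H$ is essentially self-adjoint on $C_0^\I(\R)$ for \emph{both} signs of $\ti m$ (Faris--Lavine), so $e^{-itH}$ is a unitary group on $L^2$, and it satisfies local-in-time Strichartz estimates as in the smooth subquadratic-potential setting used for Theorem~\ref{thm:main}. Combined with the mapping properties above — and, since $H^1(\R)\hookrightarrow L^\I$, even a plain energy iteration is enough in one dimension — a contraction argument gives local well-posedness of \eqref{eq:SP1} in $\Sigma$ with existence time controlled by $\norm{u_0}_\Sigma$, hence the blow-up alternative $T_{\max}<\I\Rightarrow\norm{u(t)}_\Sigma\to\I$. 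A standard approximation then justifies conservation of $\Lebn{u(t)}2$ — which makes the reduced equation equivalent to \eqref{eq:SP1} — and of the energy $\widetilde E(t)$ in \eqref{eq:1DE}.

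It remains to bound $\norm{u(t)}_\Sigma$ on any finite interval. Conservation of $\Lebn{u(t)}2$ handles the $L^2$ part. Put $A(t):=\Lebn{\d_x u(t)}2^2$ and $B(t):=\int_\R\abs x\abs{u(t,x)}^2dx$. Since every term on the right of the equation is a real function times $u$, the continuity equation $\d_t\abs u^2=-\d_x\Im(\bar u\,\d_x u)$ holds; integrating $\abs x$ against it gives the virial-type identity $B'(t)=\int_\R\sign(x)\,\Im(\bar u\,\d_x u)\,dx$, whence $\abs{B'(t)}\le\Lebn{u_0}2\sqrt{A(t)}$. Solving for $A(t)$ in the conserved energy and using $\iint_{\R^2}\abs{x-y}\abs{u(x)}^2\abs{u(y)}^2dxdy\le 2\Lebn{u_0}2^2B(t)$ and Gagliardo--Nirenberg, $\Lebn u{p+1}^{p+1}\le C\Lebn{u_0}2^{(p+3)/2}A(t)^{(p-1)/4}$, gives
\[
\frac12A(t)=\widetilde E+\frac\l2\iint_{\R^2}\abs{x-y}\abs{u(x)}^2\abs{u(y)}^2\,dxdy-\frac{\eta}{p+1}\Lebn u{p+1}^{p+1} .
\]
In case (1) the last term is $\le0$ and is discarded; in case (2) it equals $\frac{\abs\eta}{p+1}\Lebn u{p+1}^{p+1}\lesssim A(t)^{(p-1)/4}$ with exponent $(p-1)/4<1$, so it is absorbed into $\frac12A(t)$ by Young's inequality; in cases (3)--(4) the same absorption follows from smallness of $\norm{u_0}_\Sigma$, which also keeps $\widetilde E$ small, while a non-positive Hartree term (the case $\l<0$) is simply dropped. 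In every case one reaches $A(t)\le C_*+C_*B(t)$ — with vanishing coefficient on $B$ whenever $\l\le0$ — and then $\abs{B'(t)}\le C(1+\sqrt{B(t)})$, so $B(t)\lesssim(1+\abs t)^2$ and $A(t)\lesssim(1+\abs t)^2$. Thus $\norm{u(t)}_\Sigma$ stays finite on every finite interval, $T_{\max}=\I$, and the conservation laws persist; this gives Theorem~\ref{thm:1D}.

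The step I expect to be the main obstacle is the linear one: showing that $e^{-itH}$ with $H=-\frac12\d_{xx}+\ti m\Jbr x$ is a well-behaved propagator on $\Sigma$ — unitary on $L^2$, bounded on the weighted space, and satisfying local Strichartz bounds — for \emph{both} signs of $\ti m$. When $\ti m<0$ the potential is unbounded below and $\Sigma$ is no longer literally a form domain, so persistence of $\Sigma$-regularity has to be obtained from commutator and energy estimates rather than from spectral calculus; this is precisely the point treated in the corresponding part of the proof of Theorem~\ref{thm:main}, and the rest is a routine one-dimensional adaptation.
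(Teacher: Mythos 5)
Your proposal is correct and follows essentially the route the paper intends for Theorem \ref{thm:1D}: extract the linear part of the convolution (the paper uses $\tfrac{\l}{2}\Lebn{u_0}2^2|x|$, you use the smoothed $\Jbr{x}$, a harmless and in fact convenient variant since $\Jbr{x}$ is a smooth subquadratic potential), run local well-posedness and the blow-up alternative in the weighted space via the propagator of the reduced linear operator, and close the global bound by combining energy conservation with the virial-type estimate $|B'(t)|\le\Lebn{u_0}2\sqrt{A(t)}$ for the first moment. This is the same mechanism as the two-dimensional proof of Theorem \ref{thm:main} (where $\|\sqrt{\log\Jbr{x}}u(t)\|_{L^2}^2$ plays the role of your $B(t)$), with the Gagliardo--Nirenberg absorption of the power term exactly as in Theorem \ref{thm:withpower}.
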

The one-dimensional version of Theorem \ref{thm:lowpower} is as follows,
which reproduce the same result in 
\cite[Theorem 2.1]{StiMMMAS} when $\eta<0$ and $\l>0$.
\begin{theorem}\label{thm:1Dlow}
Suppose $1<p<2$. For $\eta,\l \in \R$
The problem \eqref{eq:SP1} is globally well-posed in the space
$\Sigma:= \{ u \in H^1(\R^2); |x|u \in L^2 \}$.
% with norm
% \[
% 	\norm{u}_{\mathcal{H}} := \norm{u}_{H^1(\R^2)} +
% 	\norm{\log\Jbr{\cdot} u}_{L^2(\R^2)}.
% \]
Moreover, the solution conserves $\Lebn{u(t)}2$ and the energy
$\widetilde{E}(t)$ given in \eqref{eq:1DE}.
% \[
% 	\frac12\norm{\d_x u}_{L^2(\R)}^2 - \frac{\l}2 \iint_{\R^2} |x-y| |u(x)|^2|u(y)|^2 dxdy
% 	+\frac{\eta}{p+1}\norm{u}_{L^{p+1}(\R)}^{p+1}.
% \]
\end{theorem}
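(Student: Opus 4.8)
The plan is to imitate the proof of Theorem~\ref{thm:lowpower}, with the logarithmic linear potential replaced by the Stark-type potential $-m\Jbr{x}$ produced by the same decomposition applied to the one-dimensional Newtonian kernel $\abs{x}$. Since $\abs{\abs{x-y}-\Jbr{x}}\le C\Jbr{y}$ uniformly in $x$, one has $\abs{x}*\abs{u}^2=\Jbr{x}\Lebn{u}2^2+\int_\R(\abs{x-y}-\Jbr{x})\abs{u(y)}^2\,dy$, the remainder being bounded by $C\Lebn{u}2\Lebn{\Jbr{\cdot}u}2$. Setting $m:=-\tfrac{\l}{2}\Lebn{u_0}2^2$ and freezing $\tfrac{\l}{2}\Lebn{u}2^2\Jbr{x}$ to $-m\Jbr{x}$, I study the modified equation
\[
 i\d_t u+\Big(\tfrac12\d_{xx}-m\Jbr{x}\Big)u=-\tfrac{\l}{2}u\int_{\R}\big(\abs{x-y}-\Jbr{x}\big)\abs{u(y)}^2\,dy+\eta\abs{u}^{p-1}u,\qquad u(0)=u_0 ,
\]
whose linear part is generated by the unitary group $U_m(t):=e^{it(\frac12\d_{xx}-m\Jbr{x})}$; essential self-adjointness holds because $-m\Jbr{x}\ge-C(1+x^2)$, and since the commutators of $\d_x$ and of multiplication by $x$ with the potential are bounded, $U_m(t)$ maps $\Sigma$ into $\Sigma$ with $\sup_{\abs{t}\le T}\norm{U_m(t)f}_{\Sigma}\le C_T\norm{f}_{\Sigma}$ and depends strongly continuously on $t$ --- the one-dimensional counterpart of the propagator bounds used for Theorem~\ref{thm:lowpower}. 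As in two dimensions, the term $-m\Jbr{x}u$ must be absorbed into $U_m$: it cannot be left in the nonlinearity because $\d_x(\Jbr{x}u)$ involves $x\,\d_x u$, which is not controlled by the $\Sigma$-norm.

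For the local theory write $W[u](x):=-\tfrac{\l}{2}\int_\R(\abs{x-y}-\Jbr{x})\abs{u(y)}^2\,dy$ and $N[u]:=W[u]u+\eta\abs{u}^{p-1}u$. From the kernel bound, $\norm{W[u]}_{L^\I}+\norm{\d_x W[u]}_{L^\I}\le C(\norm{u}_{\Sigma})$ and $\norm{W[u]-W[v]}_{L^\I}\le C(\norm{u}_{\Sigma},\norm{v}_{\Sigma})\Lebn{u-v}2$; together with $H^1(\R)\hookrightarrow L^\I$, with $\abs{\,\abs{a}^{p-1}a-\abs{b}^{p-1}b\,}\le C(\abs{a}+\abs{b})^{p-1}\abs{a-b}$ ($p\ge1$), and with the a.e.\ bound $\abs{\d_x(\abs{u}^{p-1}u)}\le p\abs{u}^{p-1}\abs{\d_x u}$ (valid since $p>1$), this gives $\norm{N[u]}_{\Sigma}\le C(\norm{u}_{\Sigma})$ and $\Lebn{N[u]-N[v]}2\le C(\norm{u}_{\Sigma},\norm{v}_{\Sigma})\Lebn{u-v}2$. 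I then solve $u=\Phi(u)$, $\Phi(u)(t)=U_m(t)u_0-i\int_0^tU_m(t-s)N[u(s)]\,ds$, by a contraction on $X_{T,M}=\{u\in C([0,T];L^2)\cap L^\I([0,T];\Sigma):\norm{u}_{L^\I_T\Sigma}\le M\}$ equipped with the $L^\I_TL^2$-metric (complete, by weak compactness of $\Sigma$-balls): the $\Sigma$-ball is preserved because $\norm{\Phi(u)}_{L^\I_T\Sigma}\le C_T\norm{u_0}_{\Sigma}+C_TT\,C(M)$, while the contraction holds in the weaker $L^2$-metric, where the Lipschitz estimate for $N$ loses no derivative. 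This is precisely the device that accommodates $1<p<2$: the fixed point lives in $L^\I_t\Sigma\cap C_tL^2$ rather than in $C_tH^1$. Uniqueness in $C([0,T];\Sigma)$ and continuous dependence follow from the same $L^2$-Lipschitz bound and Gronwall.

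By regularizing $u_0$ in $\Sigma$ by Schwartz functions the formal conservation computations are justified: the nonlinearity of the modified equation is $u$ times a real function, so $\Lebn{u(t)}2=\Lebn{u_0}2$, whence $m=-\tfrac{\l}{2}\Lebn{u(t)}2^2$ and $u$ solves \eqref{eq:SP1}; consequently $\widetilde{E}(t)$ in \eqref{eq:1DE} is conserved. Put $A(t):=\Lebn{\Jbr{\cdot}u(t)}2^2$ and $B(t):=\Lebn{\d_x u(t)}2^2$. The virial identity $\dot A(t)=2\,\Im\int_{\R}x\,\overline{u}\,\d_x u\le 2A(t)^{1/2}B(t)^{1/2}$ holds (real multipliers drop out), and $\big|\iint_{\R^2}\abs{x-y}\abs{u(x)}^2\abs{u(y)}^2\big|\le 2\Lebn{u_0}2^2\int_{\R}\abs{x}\abs{u}^2\le C(\Lebn{u_0}2)(1+A(t))$, while Gagliardo--Nirenberg gives $\Lebn{u}{p+1}^{p+1}\le C(\Lebn{u_0}2)B(t)^{(p-1)/2}$ with $(p-1)/2<1$; inserting these into conservation of $\widetilde{E}$ yields $B(t)\le C(1+A(t))$ \emph{for all signs of} $\l,\eta$, hence $\dot A\le C(1+A)$ and $A$ stays finite on bounded time intervals. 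Therefore $\norm{u(t)}_{\Sigma}$ does not blow up in finite time, the local solution is global (the argument for $t<0$ being identical), and since $\Sigma$ is a Hilbert space, weak continuity of $t\mapsto u(t)$ in $\Sigma$ combined with continuity of $t\mapsto\norm{u(t)}_{\Sigma}$ (readable off from the conservation laws and the virial ODE) upgrades $u$ to $C(\R;\Sigma)$. This proves Theorem~\ref{thm:1Dlow}.

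The only genuinely delicate points are those already faced in Theorem~\ref{thm:lowpower}: the map $u\mapsto\abs{u}^{p-1}u$ is merely H\"older, not Lipschitz, at the $H^1$-level, which forces the contraction, the uniqueness and the stability arguments down to the $L^2$-level inside an $\Sigma$-bounded ball, and then forces the extra step of promoting the solution from $L^\I_t\Sigma\cap C_tL^2$ to $C_t\Sigma$; by contrast, the one-dimensional ingredient proper --- the action of the Stark-type propagator $U_m$ on $\Sigma$ --- is soft, because the linear potential grows only linearly.
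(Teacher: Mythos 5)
Your proof is correct and follows essentially the same route the paper intends for this theorem: the stated reduction of $|x|*|u|^2$ to a sub-quadratic linear potential absorbed into the propagator plus a remainder controlled by $\bigl||x-y|-\Jbr{x}\bigr|\lesssim\Jbr{y}$, a contraction in the weaker $L^\I_T L^2$ metric on a $\Sigma$-bounded ball to cope with the non-Lipschitz power $1<p<2$ (exactly the device used for Theorem \ref{thm:lowpower}), and global existence from energy conservation combined with the virial identity for the weighted norm. The only deviations are harmless one-dimensional simplifications (no Strichartz estimates are needed thanks to $H^1(\R)\hookrightarrow L^\I$, and $\Jbr{x}$ replaces $|x|$) together with a trivial slip in the Gagliardo--Nirenberg exponent (in 1D one gets $\Lebn{u}{p+1}^{p+1}\lesssim B^{(p-1)/4}$, not $B^{(p-1)/2}$), which is still subcritical and does not affect the conclusion.
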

As in the two dimensional case, the key is a ``reduction'' of \eqref{eq:SP1} to
\begin{equation*}%\label{eq:SP1}
	\left\{
	\begin{aligned}
	&i\d_t u + \frac12 \d_{xx} u + \frac{\l \Lebn{u_0}2^2}2 |x| u  =
	-\frac{\l}2 u\int_\R {(|x-y|-|x|)}|u(y)|^2 dy + \eta |u|^{p-1}u,  \\
	&u(0,x) = u_0(x).
	\end{aligned}
	\right.
\end{equation*}
\smallbreak

We briefly mention about other related works.
Oh considered in \cite{OhJDE} the Cauchy problem of 
the nonlinear Schr\"odinger equation with general potential and
$L^2$-subcritical power-type nonlinearity, and
proved global well-posedness in the form domain of $-\frac12 \Delta + V$,
provided the potential $V\ge0$ satisfies $\d^\al V \in L^\I$ for $|\al|\ge2$
(see also \cite{CazBook}).
In particular, the case where the potential $V$ is a quadratic polynomial
is extensively studied.
In this case, we have several special properties such as explicit representations
of linear solutions, called Mehler's formula, and/or of the Heisenberg observables.
We refer the reader to \cite{CaAHP,CaSIAMMA,CaDCDS,KVZ-CPDE,ZxFM}
for $H^1$-subcritical and $H^1$-critical power-type nonlinearity 
and to \cite{CMS-SIAM} for $H^1$-subcritical Hartree type nonlinearity.
In \cite{Stu2DSN}, the ground states of \eqref{eq:oSP} is treated.
\smallbreak

The rest of the paper is organized as follows:
We collect some basic estimates in Section \ref{sec:pre}, 
and, in Section \ref{sec:proof} we prove Theorem \ref{thm:main}.
Section \ref{sec:power} is devoted to the study of \eqref{eq:oSPp}.

\section{Preliminaries}\label{sec:pre}
\subsection{Strichartz estimate}
We first summarize the properties on 
the operator
\begin{equation}\label{eq:A}
	A: = \frac12 \Delta - m \log \Jbr{x},
\end{equation}
where $m\neq 0$ is a real constant.
For any $m$, $A$ is essentially self-adjoint on $C_0^\I(\R^2)$
(see \cite{RSBook2}).
Since our potential is sub-quadratic, that is,
since $|\d^\alpha \log\Jbr{x}|\to0$ as $|x|\to\I$ for $|\al|=2$
and $\d^\alpha \log\Jbr{x} \in L^\I$ for $|\al| \ge 3$,
the following estimate is established in \cite{YajCMP}:
For any $T>0$,
\[
	\Lebn{e^{itA} \varphi}\I \le C|t|^{-1} \Lebn{\varphi}1
\]
for $t \in [-T,T]$, where $C$ depends on $T$ (see also \cite{FujDMJ}).
Once we know this type of estimate, the Strichartz estimate follows by interpolation.
%(see \cite{KT-AJM} and references therein).
We say that a pair $(q,r)$ is admissible if $2\le r <\I$ and $2/q=\delta(r):=1-2/r$.
\begin{lemma}[Strichartz's estimate]
For any $T>0$, 
the following properties hold:
\begin{itemize}
\item 
Suppose $\varphi \in L^2(\R^2)$.
For any admissible pair $(q,r)$, there exists a constant $C=C(T,q,r)$
such that
\[
	\norm{e^{itA} \varphi}_{L^q((-T,T);L^r)} \le C \Lebn{\varphi}2.
\]
\item Let $I \subset (-T,T)$ be an interval and $t_0 \in \overline{I}$. 
For any admissible pairs $(q,r)$ and $(\gamma,\rho)$,
there exists a constant $C=C(t,q,r,\gamma,\rho)$ such that
\[
	\norm{\int_{t_0}^t e^{i(t-s)A} F(s) ds}_{L^q(I;L^r)}
	\le C \norm{F}_{L^{\gamma^\prime}(I; L^{\rho^\prime})}
\]
for every $F \in L^{\gamma^\prime}(I; L^{\rho^\prime})$.
\end{itemize}
\end{lemma}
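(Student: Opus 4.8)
The plan is to deduce both estimates from the two facts recorded just before the statement: the unitarity of the group $e^{itA}$ on $L^2(\R^2)$, which follows from the essential self-adjointness of $A$ and gives the energy identity $\Lebn{e^{itA}\varphi}2=\Lebn{\varphi}2$; and the dispersive bound $\Lebn{e^{itA}\varphi}\I\le C|t|^{-1}\Lebn{\varphi}1$, valid for $|t|\le T$ by Yajima's result. These are exactly the two inputs of the classical $TT^*$ derivation of Strichartz estimates, with dispersive exponent matching the spatial dimension $n=2$ (decay $|t|^{-1}=|t|^{-n/2}$). The admissibility relation $2/q=\delta(r)=1-2/r$ is precisely the sharp non-endpoint admissibility for this decay, and since we impose $r<\I$ the delicate endpoint $(q,r)=(2,\I)$ is excluded throughout; this is what allows us to avoid the endpoint machinery and rely only on elementary interpolation arguments.

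First I would interpolate. Applying Riesz--Thorin between the $L^2$ conservation and the $L^1\to L^\I$ dispersive bound gives, for $2\le r\le\I$,
\[
	\Lebn{e^{itA}\varphi}r\le C(T)|t|^{-\delta(r)}\Lebn{\varphi}{r^\prime},
\]
which I would apply with $T$ replaced by $2T$ so that the bound is valid for all time differences $|t-s|<2T$ arising when $t,s\in(-T,T)$. Note $0<\delta(r)<1$ for the admissible range $q>2$, which is what the Hardy--Littlewood--Sobolev step below will require.

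Next, for the homogeneous estimate I would run the $TT^*$ argument. Writing $U(t)=e^{itA}$, the bound $\norm{U(t)\varphi}_{L^q((-T,T);L^r)}\le C\Lebn{\varphi}2$ is equivalent, by the $TT^*$ principle, to the boundedness of $F\mapsto\int_{(-T,T)}U(t-s)F(s)\,ds$ from $L^{q^\prime}((-T,T);L^{r^\prime})$ to $L^q((-T,T);L^r)$. Inserting the interpolated decay and applying the one-dimensional Hardy--Littlewood--Sobolev inequality in time to the kernel $|t-s|^{-\delta(r)}$ yields this bound: the HLS exponents match exactly because $\delta(r)=2/q$ makes the kernel of fractional-integration type for the admissible $q$, and restricting to the bounded interval only improves the estimate while producing the $T$-dependent constant.

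Finally, for the inhomogeneous estimate with two possibly distinct admissible pairs $(q,r)$ and $(\gamma,\rho)$, I would exploit the group property $U(t-s)=U(t)U(-s)$ together with $U(-s)=U(s)^*$ to factor the full (untruncated) integral as $\int_{(-T,T)}U(t-s)F(s)\,ds=U(t)\,G$ with $G:=\int_{(-T,T)}U(-s)F(s)\,ds$. Applying the homogeneous estimate for the pair $(q,r)$ to $U(t)G$ and then its dual — the homogeneous estimate for $(\gamma,\rho)$, which bounds $\Lebn{G}2$ by $\norm{F}_{L^{\gamma^\prime}(L^{\rho^\prime})}$ — gives the untruncated inhomogeneous bound. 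The retarded integral $\int_{t_0}^t U(t-s)F(s)\,ds$ is then recovered via the Christ--Kiselev lemma, which applies because the exclusion of the endpoint forces $q>2$ and $\gamma>2$, hence $\gamma^\prime<2<q$. I do not anticipate any genuine obstacle here: the only analytically hard ingredient, the dispersive estimate, is already supplied, and the rest is standard; the sole point requiring care is to keep every time integration inside $(-T,T)$ so that the local-in-time dispersive bound, and with it the $T$-dependent constant, may legitimately be used.
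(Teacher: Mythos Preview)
Your proposal is correct and matches the paper's approach: the paper does not give a detailed proof but simply remarks that once the dispersive bound $\Lebn{e^{itA}\varphi}\I \le C|t|^{-1}\Lebn{\varphi}1$ is known, ``the Strichartz estimate follows by interpolation.'' Your $TT^*$ argument with Hardy--Littlewood--Sobolev in time and the Christ--Kiselev lemma for the retarded estimate is precisely the standard way to flesh out that remark, and nothing further is needed.
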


\subsection{Some estiamtes}
\begin{lemma}\label{lem:commutatorA}
Let $W$ be an arbitrary weight function such that $\nabla W$, $\Delta W \in L^\I(\R^2)$.
It holds for all $T >0$, admissible pair $(q,r)$,
and $\varphi \in \mathcal{H}$ that
\begin{align*}
	\norm{[\nabla ,e^{itA}] \varphi}_{L^{q}((-T,T);L^{r})}
	\le {}& C |T| \norm{\varphi}_{2}, \\
	\norm{[W ,e^{itA}] \varphi}_{L^{q}((-T,T);L^{r})}
	\le {}& C |T| \norm{(1+\nabla)\varphi}_{2}. \\
\end{align*}
\end{lemma}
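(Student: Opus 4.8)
The plan is to reduce both commutator bounds to a Duhamel-type identity for the operators $[\nabla, e^{itA}]$ and $[W, e^{itA}]$, and then close with the Strichartz estimate already established. The starting point is the observation that for a fixed operator $B$ (either $\nabla$ or multiplication by $W$), the operator-valued function $t \mapsto [B, e^{itA}]$ satisfies $\frac{d}{dt}(e^{-itA} B e^{itA}) = e^{-itA} i[B,A] e^{itA}$, which upon integrating and conjugating back by $e^{itA}$ gives the representation
\[
	[B, e^{itA}] \varphi = \int_0^t e^{i(t-s)A} \, i[A,B] \, e^{isA} \varphi \, ds.
\]
So everything hinges on controlling the commutator $[A,B]$ as an operator, where $A = \frac12\Delta - m\log\Jbr{x}$. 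First I would compute $[A,\nabla] = [\frac12\Delta - m\log\Jbr{\cdot}, \nabla] = m (\nabla \log\Jbr{\cdot})$, i.e.\ multiplication by the bounded vector field $m\nabla\log\Jbr{x}$ (the Laplacian commutes with $\nabla$), which lies in $L^\infty$ by hypothesis on the potential (indeed $\nabla\log\Jbr{x}$ is bounded). Similarly $[A, W] = \frac12[\Delta, W] = \frac12(\Delta W) + (\nabla W)\cdot\nabla$, a first-order operator with $L^\infty$ coefficients by the assumption $\nabla W, \Delta W \in L^\infty$.

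Next I would insert these into the Duhamel representation and apply the inhomogeneous Strichartz estimate of Lemma 1.3 (second bullet) with the endpoint/dual exponents chosen to be $(\gamma,\rho) = (\infty, 2)$ on the right, so that $\|[B,e^{itA}]\varphi\|_{L^q((-T,T);L^r)} \le C \|\,[A,B] e^{isA}\varphi\,\|_{L^1((-T,T);L^2)}$. For $B = \nabla$ the integrand is $m(\nabla\log\Jbr{x}) e^{isA}\varphi$, whose $L^2$ norm is bounded by $C\|e^{isA}\varphi\|_{L^2} = C\|\varphi\|_{L^2}$ by unitarity of $e^{isA}$ on $L^2$; integrating over $s \in (-T,T)$ yields the factor $|T|$ and the bound $C|T|\|\varphi\|_2$. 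For $B = W$ the integrand is $\frac12(\Delta W)e^{isA}\varphi + (\nabla W)\cdot\nabla e^{isA}\varphi$; the first term is handled exactly as before, and for the second I would commute $\nabla$ past $e^{isA}$ once more, writing $\nabla e^{isA}\varphi = e^{isA}\nabla\varphi + [\nabla, e^{isA}]\varphi$, so its $L^2$ norm is controlled by $\|\nabla\varphi\|_2 + C|s|\,\|\varphi\|_2 \le C(1+|T|)\|(1+\nabla)\varphi\|_2$, and again integrating in $s$ produces the stated bound (absorbing $T$-dependence into the constant as the statement permits).

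The main technical point to be careful about — and the only genuine obstacle — is justifying the Duhamel identity and the commutator computations rigorously rather than formally: $e^{itA}$ does not preserve $\mathcal H$ a priori, and $\nabla e^{itA}\varphi$, $W e^{itA}\varphi$ need to be shown to make sense. The clean way is to run the argument first for $\varphi \in C_0^\infty(\R^2)$ (a core for $A$), where all manipulations are legitimate and the time-derivative computation is standard, establish the inequalities there with constants independent of $\varphi$, and then extend by density: the right-hand sides $\|\varphi\|_2$ and $\|(1+\nabla)\varphi\|_2 \le C\|\varphi\|_{\mathcal H}$ are continuous in the $\mathcal H$-norm, and the left-hand sides are lower semicontinuous, so the bounds pass to the limit for all $\varphi \in \mathcal H$. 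I expect the sub-quadratic structure of the potential (already invoked for the dispersive estimate) to be exactly what guarantees $\nabla\log\Jbr{x} \in L^\infty$ and hence makes the $B=\nabla$ case work; the $B=W$ case is then a routine variant.
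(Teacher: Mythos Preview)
Your proposal is correct and follows exactly the paper's approach: the paper also derives the Duhamel identities
$[\nabla,e^{itA}]\varphi = -i\int_0^t e^{i(t-s)A}\frac{mx}{1+x^2}e^{isA}\varphi\,ds$ and
$[W,e^{itA}]\varphi = i\int_0^t e^{i(t-s)A}\bigl(\nabla W\cdot\nabla + \tfrac12\Delta W\bigr)e^{isA}\varphi\,ds$
and then invokes the inhomogeneous Strichartz estimate. Your write-up is in fact more detailed than the paper's (which omits the step where one bounds $\tnorm{\nabla e^{isA}\varphi}_{L^2}$ via the first commutator estimate, and the density justification); note only the harmless sign slip $i[A,B]$ versus $i[B,A]$ in your displayed Duhamel formula.
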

\begin{proof}
Since $v=e^{itA}\varphi$ solves $i\d_t v + Av =0$,
an explicit calculation shows
\[
	[\nabla ,e^{itA}] \varphi= - i\int_0^t e^{i(t-s)A}\frac{mx}{1+x^2} e^{isA}\varphi ds
\]
and
\[
	[W ,e^{itA}] \varphi=  i\int_0^t e^{i(t-s)A} 
	\(\nabla W\cdot \nabla + \frac12 \Delta W\) e^{isA}\varphi ds.
\]
The Strichartz estimate therefore gives the desired estimates.
\end{proof}

% \subsection{An estimate}
% We establish an estimate which we use for estimates on the
% nonlinearity of \eqref{eq:SP}.
The following is useful for estimates of the nonlinearity in \eqref{eq:SP}.
\begin{lemma}\label{lem:estK}
Set a function
\[
	K(x,y) = \frac{\log \frac{|x-y|}{\Jbr{x}}}{1+\log\Jbr{y}}
\]
of $x,y \in \R^2$.
For any $p \in [1,\I)$ and $\eps>0$, there exist a function $W(x,y)\ge 0$
with $\norm{W}_{L^\I_y L^p_x} \le \eps$ and a constant $C_0$
such that
\[
	|K(x,y)| \le C_0 + W(x,y)
\]
holds for all $(x,y) \in \R^{2+2}$.
\end{lemma}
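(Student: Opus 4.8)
The plan is to estimate the numerator $\log\frac{|x-y|}{\Jbr{x}}$ from above and from below while keeping the ratio intact, rather than splitting it as $\log|x-y|-\log\Jbr{x}$. That split is too lossy: when $|x|$ is large while $|y|$ stays bounded the numerator is essentially $0$, yet $\log\Jbr{x}$ is large and is not controlled by $\log\Jbr{y}$. Instead I would use two elementary inequalities: the triangle inequality in the form $|x-y|\le |x|+|y|\le 2\Jbr{x}\Jbr{y}$ for the upper bound, and Peetre's inequality $\Jbr{x}\le \sqrt2\,\Jbr{x-y}\,\Jbr{y}$ for the lower bound.

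First, from $|x-y|\le 2\Jbr{x}\Jbr{y}$ one gets $\log\frac{|x-y|}{\Jbr{x}}\le \log 2+\log\Jbr{y}\le 1+\log\Jbr{y}$. For the reverse direction I would write $\frac{\Jbr{x}}{|x-y|}\le \sqrt2\,\Jbr{y}\,\frac{\Jbr{x-y}}{|x-y|}$ and use that $\frac{\Jbr{z}}{|z|}\le \sqrt2$ for $|z|\ge 1$ and $\frac{\Jbr{z}}{|z|}\le \frac{\sqrt2}{|z|}$ for $|z|<1$; this gives $\log\frac{\Jbr{x}}{|x-y|}\le 1+\log\Jbr{y}+g(x,y)$, where $g(x,y):=\log\frac{1}{|x-y|}$ for $|x-y|<1$ and $g(x,y):=0$ otherwise. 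The crucial feature of this step is that the a~priori large quantity $\log\Jbr{x}$ is absorbed completely --- either into $\log\Jbr{y}$ or into the purely local logarithmic singularity $g$ --- with no residual growth in $x$. Combining the two bounds yields $\bigl|\log\frac{|x-y|}{\Jbr{x}}\bigr|\le 1+\log\Jbr{y}+g(x,y)$ for $x\ne y$, and dividing by $1+\log\Jbr{y}\ge 1$ gives the pointwise bound $|K(x,y)|\le 1+g(x,y)$.

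It then remains to replace $g$ by an arbitrarily small remainder. One checks that $g\in L^\I_yL^p_x$ with $\|g\|_{L^\I_yL^p_x}=\bigl(2\pi\int_0^1 r(\log\tfrac1r)^p\,dr\bigr)^{1/p}<\I$, the radial integral being finite because $r(\log\tfrac1r)^p$ is integrable near $0$. Given $\eps>0$ and $p\in[1,\I)$ I would pick $\rho_0\in(0,1)$ so small that $\bigl(2\pi\int_0^{\rho_0}r(\log\tfrac1r)^p\,dr\bigr)^{1/p}\le\eps$, set $W(x,y):=\log\frac{1}{|x-y|}$ for $|x-y|<\rho_0$ and $W(x,y):=0$ otherwise, and take $C_0:=1+\log\frac{1}{\rho_0}$. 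Then $g\le W+\log\frac1{\rho_0}$ pointwise, hence $|K(x,y)|\le C_0+W(x,y)$ with $W\ge0$ and $\|W\|_{L^\I_yL^p_x}\le\eps$; here $C_0$ is allowed to depend on $p$ and $\eps$, which the statement permits.

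I expect the only genuine difficulty to be the lower bound on the numerator: one must resist the naive logarithm split and instead exploit the cancellation encoded in $\Jbr{x}\le\sqrt2\,\Jbr{x-y}\Jbr{y}$, so that no power or logarithm of $\Jbr{x}$ survives uncontrolled. Everything else --- tracking the explicit constants and performing the final truncation at radius $\rho_0$, which is a standard device for turning a fixed $L^p$ bound on a logarithmic singularity into an arbitrarily small one --- is routine.
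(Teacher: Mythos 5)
Your proof is correct and follows essentially the same strategy as the paper's: split according to whether $|x-y|$ is small, put the local logarithmic singularity $\log\frac{1}{|x-y|}$ near the diagonal into the small-$L^p_x$ part $W$, and bound $K$ by a constant away from the diagonal. The only real difference is that you derive the off-diagonal constant bound self-containedly via Peetre's inequality $\Jbr{x}\le\sqrt2\,\Jbr{x-y}\Jbr{y}$, whereas the paper imports it from estimate (2.12) of an earlier work.
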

\begin{proof}
Take $\eta \in (0,1]$ and set $W(x,y) = |K(x,y)| {\bf 1}_{|x-y|\le \eta}(x,y)$.
If $\eta$ is sufficiently small then
\[
	\Lebn{W(\cdot,y)}p \le \frac{\norm{\log|x|}_{L^p(|x|\le \eta)} + \log\Jbr{|y|+\eta}
	\norm{1}_{L^p(|x|\le \eta)}}{1+ \log\Jbr{y}}\le \eps
\]
since $\log|x|$ belongs to $L^p_{\mathrm{loc}}(\R^2)$ for all $p<\I$.
Moreover,
by (2.12) of \cite{Ma2DSP}, 
\[
	\sup_{|x-y|\ge \eta} K(x,y) \le 1+ \log \frac{\sqrt3}{\eta}
\]
for any $\eta\le1$, which completes the proof.
\end{proof}
\begin{remark}
In 1D case, the corresponding estimate is
\[
	\norm{\frac{|x-y|-|x|}{1+|y|}}_{L^\I_{x,y}(\R^2)}
	\le 1.
\]
\end{remark}

\section{Proof of the theorem}\label{sec:proof}
\subsection{Local well-posedness}
\begin{lemma}\label{lem:LWP}
Let $(q_0,r_0)$ be an admissible pair with $r_0>2$.
For any $u_0 \in \mathcal{H}$, there exist
an existence time $T=T(\norm{u_0}_{\mathcal{H}})$ 
and a unique solution $u \in C((-T,T);\mathcal{H}) \cap
L^{q_0}((-T,T);L^{r_0}) \cap C^1((-T,T);\mathcal{H}^{*})$.
The solution conserves $\Lebn{u(t)}2$ and the energy \eqref{eq:energy}.
Moreover, the map $u_0\mapsto u$ is continuous from $\mathcal{H}$ to
$C((-T,T);\mathcal{H})$.
\end{lemma}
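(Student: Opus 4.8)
The plan is a contraction argument on Duhamel's formula for \eqref{eq:SP},
\[
	u(t) = e^{itA}u_0 + \frac{i\l}{2\pi}\int_0^t e^{i(t-s)A}\Phi(u(s))\,ds,
	\qquad
	\Phi(u)(x):=u(x)\int_{\R^2}\Big(\log\tfrac{|x-y|}{\Jbr{x}}\Big)|u(y)|^2\,dy,
\]
on the complete ball $B_T:=\{u\in C([-T,T];\mathcal H):\norm{u}_{L^\I((-T,T);\mathcal H)}\le 2\norm{u_0}_{\mathcal H}\}$, with $T=T(\norm{u_0}_{\mathcal H})>0$ to be fixed small. The first ingredient is that $e^{itA}$ acts boundedly on $\mathcal H$: from $\nabla e^{itA}\varphi=e^{itA}\nabla\varphi+[\nabla,e^{itA}]\varphi$ and the first bound of Lemma~\ref{lem:commutatorA} the correction is $\le C|t|\,\Lebn{\varphi}2$, and the same holds for the weight once $\sqrt{\log\Jbr{\cdot}}$ is replaced by a smooth majorant $W\ge0$ agreeing with it for $|x|\ge1$ --- the difference is bounded with compact support, costing only $\Lebn{\varphi}2$, while $\nabla W,\Delta W\in L^\I$, so the second bound of Lemma~\ref{lem:commutatorA} applies to $W$. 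Hence $\norm{e^{itA}\varphi}_{\mathcal H}\le(1+C|t|)\norm{\varphi}_{\mathcal H}$, and $e^{itA}$ is strongly continuous on $\mathcal H$ by this bound and density of $C_0^\I(\R^2)$. Integrating the commutator identities from the proof of Lemma~\ref{lem:commutatorA} yields the analogous statements under the Duhamel integral: moving $\nabla$ or $W$ past $e^{i(t-s)A}$ there produces, after one more use of the Strichartz estimate, a contribution controlled by $C|T|$ times dual-Strichartz norms of $\Phi(u)$ and $\nabla\Phi(u)$, both estimated below.

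The crux is the nonlinear estimate. Using $\log\tfrac{|x-y|}{\Jbr{x}}=K(x,y)(1+\log\Jbr{y})$ with $K$ as in Lemma~\ref{lem:estK}, we have $\Phi(u)=u\,V_u$ with $V_u(x)=\int_{\R^2}K(x,y)(1+\log\Jbr{y})|u(y)|^2\,dy$, and Lemma~\ref{lem:estK}, applied with some fixed $p>2$ and $\eps=1$, gives a splitting $V_u=V_u^{(1)}+V_u^{(2)}$ with $\Lebn{V_u^{(1)}}\I\le C\norm{u}_{\mathcal H}^2$ and $\Lebn{V_u^{(2)}}p\le C\norm{u}_{\mathcal H}^2$; the decisive gain is that $V_u$ carries \emph{no} logarithmic growth, which is exactly what the reduction to \eqref{eq:SP} was designed to provide. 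Differentiating, $\nabla\Phi(u)=(\nabla u)V_u+u\,\nabla V_u$ with $\nabla V_u(x)=\int_{\R^2}\big(\tfrac{x-y}{|x-y|^2}-\tfrac{x}{\Jbr{x}^2}\big)|u(y)|^2\,dy$: the second term is bounded by $\tfrac12\Lebn{u}2^2$, and the first, dominated by $\big(\tfrac1{|\cdot|}*|u|^2\big)(x)$, is bounded by $C(\Lebn{u}2^2+\norm{u}_{H^1(\R^2)}^2)$ after splitting $\tfrac1{|z|}=\tfrac1{|z|}{\bf 1}_{|z|\le1}+\tfrac1{|z|}{\bf 1}_{|z|>1}$ and using $\tfrac1{|\cdot|}{\bf 1}_{|z|\le1}\in L^q$ $(q<2)$ together with $H^1(\R^2)\hookrightarrow L^s(\R^2)$ $(s<\I)$; likewise $W\Phi(u)=(Wu)V_u$. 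The Duhamel term is then estimated in dual Strichartz norms. All pieces except $(\nabla u)V_u^{(2)}$ and $(Wu)V_u^{(2)}$ are put into $L^1((-T,T);L^2)$ --- for instance $\Lebn{u\,V_u^{(1)}}2\le\Lebn{V_u^{(1)}}\I\Lebn{u}2\lesssim\norm{u}_{\mathcal H}^3$ and $\Lebn{u\,V_u^{(2)}}2\le\Lebn{u}{r_1}\Lebn{V_u^{(2)}}p\lesssim\norm{u}_{\mathcal H}^3$ with $r_1=2p/(p-2)<\I$ --- producing a factor $|T|$; and $(\nabla u)V_u^{(2)}$, $(Wu)V_u^{(2)}$, estimated in $L^s$ ($\tfrac1s=\tfrac12+\tfrac1p$) by H\"older in space against $\Lebn{\nabla u}2$, $\Lebn{Wu}2$ and then by H\"older in time, are put in the dual of the admissible pair $(\gamma,s')$, producing a factor $|T|^\theta$ with $\theta=1/\gamma'>0$. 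Altogether $\norm{\Phi(u)}\lesssim(|T|+|T|^\theta)\big(\sup_{|t|\le T}\norm{u(t)}_{\mathcal H}\big)^3$ in the relevant dual norm, and the corresponding Lipschitz bound for $\Phi(u)-\Phi(v)$ holds with $\sup\norm{u}_{\mathcal H}^2+\sup\norm{v}_{\mathcal H}^2$ in place of the cube (the nonlinearity being essentially cubic, this is routine). Together with the first step, for $T$ small the Duhamel map contracts $B_T$, giving a unique fixed point $u\in C([-T,T];\mathcal H)$; a further application of the Strichartz estimate to the fixed-point identity places $u$ in $L^{q_0}((-T,T);L^{r_0})$.

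The remaining assertions are standard. Uniqueness in the full class $C((-T,T);\mathcal H)\cap L^{q_0}((-T,T);L^{r_0})$ follows from a difference estimate on a short subinterval: two solutions are bounded in $\mathcal H$ on compact subintervals, so the trilinear bounds above give $\norm{u-v}\le C\delta^\theta\norm{u-v}$ near the initial time, forcing $u=v$ there, and the coincidence set is open and closed; continuous dependence $u_0\mapsto u$ follows by applying the same estimates to the difference of two Duhamel maps. From the equation $\d_t u=iAu+\tfrac{i\l}{2\pi}\Phi(u)$, with $A\colon\mathcal H\to\mathcal H^{*}$ bounded and $\Phi(u)\in L^2\hookrightarrow\mathcal H^{*}$ (taking $p$ large makes $u\,V_u^{(2)}\in L^2$) depending continuously on $t$, we get $u\in C^1((-T,T);\mathcal H^{*})$. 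Conservation of $\Lebn{u(t)}2$ and of the energy \eqref{eq:energy} is obtained by the usual regularization: approximate $u_0$ in $\mathcal H$ by Schwartz data, for which the formal identities are legitimate --- pairing the equation with $\bar u$ and taking imaginary parts makes the real potential $-m\log\Jbr{x}$ and the real factor $V_u$ drop out, giving $\tfrac{d}{dt}\Lebn{u}2^2=0$, so that \eqref{eq:SP} coincides with \eqref{eq:oSP}, whose Hamiltonian is \eqref{eq:energy}, and the usual computation gives $\tfrac{d}{dt}E=0$ --- and one passes to the limit using continuous dependence, noting that $\iint_{\R^2\times\R^2}(\log|x-y|)|u(x)|^2|u(y)|^2\,dx\,dy$ is finite and continuous on $\mathcal H$ because $\log|x-y|\le C(1+\log\Jbr{x}+\log\Jbr{y})$ for $|x-y|\ge1$ while the near-diagonal part is controlled by $\Lebn{|u|^2}1$ and $\Lebn{|u|^2}q$ $(q<\I)$.

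The main obstacle throughout is propagating the weighted part $\norm{\sqrt{\log\Jbr{\cdot}}\,u}_{L^2(\R^2)}$ of the $\mathcal H$-norm along the flow: this succeeds only because Lemma~\ref{lem:commutatorA} permits $W$ to be commuted past $e^{itA}$ at the cost of merely $\Lebn{(1+\nabla)\varphi}2$, and because Lemma~\ref{lem:estK} forces $V_u$ to be bounded rather than logarithmically divergent, so that multiplication by $V_u$ does not destroy the weighted integrability of $u$ --- both features resting on the prior reduction of \eqref{eq:oSP} to \eqref{eq:SP}.
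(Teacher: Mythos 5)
Your proposal is correct and follows essentially the same route as the paper: Duhamel's formula for \eqref{eq:SP}, Strichartz estimates for $e^{itA}$, Lemma~\ref{lem:commutatorA} to commute $\nabla$ and the (smoothed) weight past the propagator, and the decomposition of Lemma~\ref{lem:estK} to show the potential $V_u$ is bounded plus a small $L^p$ part, with conservation laws obtained by regularization. The only differences are cosmetic: you contract in the plain $L^\infty_T\mathcal{H}$ ball and recover the Strichartz norms afterwards, whereas the paper builds them into the resolution space, and you replace the Hardy--Littlewood--Sobolev step for $(\nabla P)u$ by an elementary near/far splitting of $1/|z|$.
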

\begin{proof}
We write $L^p((-T,T);X)=L^p_T X$, for short.
Define a Banach space
\[
	\mathcal{H}_{T,M} := \{f \in L^\I((-T,T);\mathcal{H});
	\norm{f}_{\mathcal{H}_{T}} \le M \}
\]
with norm
\begin{align*}
	\norm{f}_{\mathcal{H}_{T}} :={}& 
	\norm{f}_{L^\I_T \mathcal{H}} + \norm{f}_{L^{q_0}_T W^{1,r_0}}
	+ \norm{\sqrt{\log\Jbr{x}} f}_{L^{q_0}_T L^{r_0}}.
\end{align*}
We show that if $r_0>2$ then 
there exist $M=M(\norm{u_0}_{\mathcal{H}})$ and
$T=T(\norm{u_0}_{\mathcal{H}})$ such that
\begin{multline*}
	Q[u](t,x) := (e^{itA} u_0)(x) \\
	+ \frac{i}{2\pi} \(\int_{0}^t e^{i(t-s)A} 
	\(\int_{\R^2} \log\frac{|\cdot-y|}{\Jbr{\cdot}} |u(s,y)|^2 dy\) u(s,\cdot) ds\)(x)
\end{multline*}
becomes a contraction map from $\mathcal{H}_{T,M}$ to itself,
where $A$ is defined in \eqref{eq:A}.
\smallbreak

Set
\[
	K(x,y) = \frac{\log \frac{|x-y|}{\Jbr{x}}}{1+\log\Jbr{y} }.
\]
Then, by Lemma \ref{lem:estK}, 
there exist a nonnegative function $W\in L^\I_y L_x^{r_0^\prime}$ and a
constant $C_0$ such that
\[
	|K(x,y)| \le C_0 + W(x,y).
\]
Recall that $r_0 \in (2,\I)$ and so $r_0^\prime:=r_0/(r_0-1) \in (1,2)$.
We hence see that
\[
	Pu=\iint K(x,y) (1+\log\Jbr{y} )|u(y)|^2 u(x) dy\, dx	
\]
satisfies
\[
	\Lebn{Pu}2 \le C (\Lebn{u}2 + \Lebn{u}{r_0})\Lebn{\sqrt{1+\log\Jbr{x}}u}2^2.
\]
Take $L^1_T$ norm to yield
\begin{equation}\label{eq:esttmp1}
	\norm{Pu}_{L^1_TL^2} \le C
	(T\norm{u}_{L^\I_TL^2} + T^{\frac12+\frac1{r_0}}\norm{u}_{L^{q_0}_TL^{r_0}})
	\norm{\sqrt{1+\log\Jbr{x}}u}_{L^\I_TL^2}^2.
\end{equation}
By the Strichartz estimate, we end up with
\begin{equation}\label{eq:estQ1}
	\norm{Q[u]}_{L^\I_T L^2} + \norm{Q[u]}_{L^{q_0}_T L^{r_0}}
	\le C\Lebn{u_0}2 + 
	C(T+ T^{\frac12+\frac1{r_0}})\norm{u}_{\mathcal{H}_{T}}^3.
\end{equation}

We next estimate $\nabla Q[u]$.
One easily sees that
\begin{align*}
	\nabla Q [u] ={}& e^{itA} \nabla u_0 - i\int_0^t e^{i(t-s)A} \nabla (Pu)(s)ds \\
	&{}+[\nabla, e^{itA} ] u_0 - i\int_0^t [\nabla, e^{i(t-s)A}] (Pu)(s)ds.
\end{align*}
We deduce from Lemma \ref{lem:commutatorA} with $(q,r)=(\I,2)$ that
\[
	\int_0^t \Lebn{[\nabla, e^{i(t-s)A}] (Pu)(s)}2 ds
	\le \int_0^t (t-s) \Lebn{Pu(s)}2 ds
	\le |t| \norm{Pu}_{L^1_TL^2}.
\]
The right hand side is bounded as in \eqref{eq:esttmp1}.
$[\nabla, e^{itA} ] u_0$ is handled similarly.
Mimicking \eqref{eq:esttmp1}, we infer that
\begin{equation}\label{eq:esttmp2}
	\norm{P\nabla u}_{L^1_TL^2}\le 
	C(T\norm{\nabla u}_{L^\I_TL^2} + T^{\frac12+\frac1{r_0}}\norm{\nabla u}_{L^{q_0}_TL^{r_0}})
	\norm{\sqrt{1+\log\Jbr{x}}u}_{L^\I_TL^2}^2
\end{equation}
Now, let us estimate $(\nabla P)u$. It writes
\[
	(\nabla P) (x)u(x) = 
	\(\int_{\R^2}\(\frac{x-y}{|x-y|^2} - \frac{x}{1+x^2}\)|u(y)|^2 dy \)u(x) ,
\]
and so
\begin{align*}
	\Lebn{(\nabla P) u}2
	\le {}& C \Lebn{(|x|^{-1}*|u|^2) + \Jbr{\cdot}^{-1}\Lebn{u}2^2}{\frac{2r_0}{r_0-2}} \Lebn{u}{r_0}\\
	\le {}& C(\Lebn{u}{\frac{2r_0}{r_0-1}}^2+\Lebn{u}2^2) \Lebn{u}{r_0}\\
	\le {}& C(\Lebn{u}2^2+\Lebn{\nabla u}2^2) \Lebn{u}{r_0}
\end{align*}
by the Hardy-Littlewood-Sobolev and the Sobolev inequalities.
We see that
\begin{equation}\label{eq:esttmp3}
	\norm{(\nabla P)u}_{L^1_TL^2} \le
	C(T \norm{u}_{L_T^\I L^2}^2 + T^{\frac12+\frac1{r_0}}\norm{u}_{L_T^{q_0} L^{r_0}}^2) \norm{u}_{L_T^\I L^2}.
\end{equation}
We deduce from the Strichartz estimate that
\begin{equation}\label{eq:estQ2}
	\norm{\nabla Q[u]}_{L^\I_TL^2} + \norm{\nabla Q[u]}_{L^{q_0}_TL^{r_0}}
	\le C\norm{\nabla u_0}_{\mathcal{H}}
	+ C(T+T^{\frac12+\frac1{r_0}})\norm{u}_{\mathcal{H}_{T}}^3.
\end{equation}

Let us proceed to the estimate of $\sqrt{\log \Jbr{x}} Q[u]$.
It holds that
\begin{align*}
	\sqrt{1+\log\Jbr{x}} Q[u] ={}&
	e^{itA} \sqrt{1+\log\Jbr{x}} u_0 - i\int_0^t e^{i(t-s)A} \sqrt{1+\log\Jbr{x}}Pu(s)ds\\
	&{} +R,
\end{align*}
where
\[
	R = [\sqrt{1+\log\Jbr{x}}, e^{itA} ]u_0
	- i\int_0^t [ \sqrt{1+\log\Jbr{x}}, e^{i(t-s)A}]Pu(s)ds.
\]
A use of Lemma \ref{lem:commutatorA} with
$W=\sqrt{1+\log\Jbr{x}}$ yields
\begin{align*}
	\norm{R}_{L^\I_T L^2}+ \norm{R}_{L^{q_0}_T L^{r_0}} 
	&{}\le CT\norm{u_0}_{\mathcal{H}} + CT\norm{(1+\nabla)(Pu)}_{L^1_TL^2}\\
	&{}\le CT\norm{u_0}_{\mathcal{H}} + 
	CT(T + T^{\frac12+\frac1{r_0}}) \norm{u}_{\mathcal{H}_T}^3
\end{align*}
where we have used \eqref{eq:esttmp1}, \eqref{eq:esttmp2}, and \eqref{eq:esttmp3}. 
As in \eqref{eq:esttmp1}, it holds that
\begin{align*}%\label{eq:esttmp4}
	\norm{P(Wu)}_{L^1_TL^2} \le{}& C
	(T\norm{Wu}_{L^\I_TL^2} + T^{\frac12+\frac1{r_0}}\norm{Wu}_{L^{q_0}_TL^{r_0}})
	\norm{Wu}_{L^\I_TL^2}^2\\
	\le {}& C (T+T^{\frac12+\frac1{r_0}})\norm{u}_{\mathcal{H}_T}^3,
\end{align*}
where $W=\sqrt{1+\log\Jbr{x}}$.
We conclude from the Strichartz estimate, \eqref{eq:estQ1}, and \eqref{eq:estQ2} that
\begin{equation*}
	\norm{ Q[u]}_{\mathcal{H}_T} 
	\le C_1\norm{u_0}_{\mathcal{H}}
	+ C_2(T+T^{\frac12+\frac1{r_0}})\norm{u}_{\mathcal{H}_{T}}^3.
\end{equation*}
A similar argument shows
\begin{equation*}
	\norm{ Q[u_1]- Q[u_2]}_{\mathcal{H}_T} 
	\le  C_3(T+T^{\frac12+\frac1{r_0}})(\norm{u_1}_{\mathcal{H}_{T}}
	+\norm{u_2}_{\mathcal{H}_{T}} )^2 \norm{u_1-u_2}_{\mathcal{H}_{T}}.
\end{equation*}
Thus, if we take $M\ge 2C_1\norm{u_0}_{\mathcal{H}}$
then there exists $T=T(M)$ such that $Q$ is a contraction map from $\mathcal{H}_{T,M}$ to itself.

The conservations of $\Lebn{u(t)}2$ is shown by multiplying \eqref{eq:SP}
by $\overline{u}$ and integrating the imaginary part.
To prove the energy conservation, we need a regularizing argument.
Note that \eqref{eq:SP} can be solved also in the space
$\{f \in H^2(\R^2): \log\Jbr{x} f \in L^2 \}$,
which is one of dense subsets of $\mathcal{H}$,
in an essentially same way. We omit details.
\end{proof}

\subsection{Global existence}
We first give a useful blow-up criteria.
\begin{lemma}\label{lem:BA}
Suppose $u_0 \in \mathcal{H}$.
Let $u \in C((-T_{\mathrm{min}},T_{\mathrm{max}});\mathcal{H})$
be a unique maximal solution given by Lemma \ref{lem:LWP}.
If $T_{\mathrm{max}}<\I$ (resp. $T_{\mathrm{min}}<\I$), then
$\Lebn{\nabla u(t)}2 \to \I$ as $t \uparrow T_{\mathrm{max}}$
(resp. $t \downarrow -T_{\mathrm{min}}$).
\end{lemma}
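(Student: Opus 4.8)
The plan is to deduce the statement from the standard blow-up alternative together with an a priori bound on the logarithmic weight. First I would note that it suffices to treat $T_{\mathrm{max}}$: since $\l$ is real, $(t,x)\mapsto\overline{u(-t,x)}$ solves \eqref{eq:SP} with data $\overline{u_0}$ (and $\Lebn{\overline{u_0}}2=\Lebn{u_0}2$, so $m$ is unchanged), hence the assertion for $T_{\mathrm{min}}$ follows by time reversal. Next, because the local existence time in Lemma \ref{lem:LWP} depends only on $\norm{u_0}_{\mathcal{H}}$, the usual continuation argument applies: if $T_{\mathrm{max}}<\I$ and $\norm{u(t_n)}_{\mathcal{H}}$ stayed bounded along some sequence $t_n\uparrow T_{\mathrm{max}}$, one could solve \eqref{eq:SP} with data $u(t_n)$ on an interval of fixed length and, for $n$ large, extend $u$ past $T_{\mathrm{max}}$ --- a contradiction; hence $\norm{u(t)}_{\mathcal{H}}\to\I$ as $t\uparrow T_{\mathrm{max}}$. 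Since $\Lebn{u(t)}2=\Lebn{u_0}2$ is conserved, the only part of $\norm{u(t)}_{\mathcal{H}}$ not already controlled by $\Lebn{\nabla u(t)}2$ is $\Lebn{\sqrt{\log\Jbr{\cdot}}u(t)}2$, so everything reduces to bounding this quantity in terms of $\Lebn{\nabla u(t)}2$ and $t$.

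To obtain that bound I would test \eqref{eq:SP} against $(1+\log\Jbr{x})\overline{u}$ and take imaginary parts, i.e.\ compute $\frac{d}{dt}\int_{\R^2}(1+\log\Jbr{x})|u(t,x)|^2\,dx$. The useful cancellations are that the linear-potential contribution $-m(\log\Jbr{x})(1+\log\Jbr{x})|u|^2$ is purely imaginary, and that the nonlinearity has the form $c(x)|u|^2$ with
\[
	c(x)=-\frac{\l}{2\pi}\int_{\R^2}\log\frac{|x-y|}{\Jbr{x}}|u(t,y)|^2\,dy
\]
real-valued, hence also purely imaginary; so only the kinetic term survives, and one integration by parts turns it into $\Im\int_{\R^2}(\nabla\log\Jbr{x}\cdot\nabla u)\,\overline{u}\,dx$. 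Since $|\nabla\log\Jbr{x}|=|x|(1+|x|^2)^{-1}\le\frac12$, this is at most $\frac12\Lebn{\nabla u(t)}2\Lebn{u(t)}2=\frac12\Lebn{u_0}2\Lebn{\nabla u(t)}2$, so integrating in time
\[
	\Lebn{\sqrt{\log\Jbr{\cdot}}u(t)}2^2\le\int_{\R^2}(1+\log\Jbr{x})|u(t,x)|^2\,dx\le\norm{u_0}_{\mathcal{H}}^2+\tfrac12\Lebn{u_0}2\int_0^t\Lebn{\nabla u(s)}2\,ds.
\]
Together with $\norm{u(t)}_{\mathcal{H}}^2\le 2\Lebn{u_0}2^2+2\Lebn{\nabla u(t)}2^2+2\Lebn{\sqrt{\log\Jbr{\cdot}}u(t)}2^2$, this shows that if $T_{\mathrm{max}}<\I$ and $\Lebn{\nabla u(\cdot)}2$ were bounded on $[0,T_{\mathrm{max}})$, then $\norm{u(t)}_{\mathcal{H}}$ would be bounded there, contradicting the previous paragraph; hence $\Lebn{\nabla u(t)}2\to\I$ as $t\uparrow T_{\mathrm{max}}$ (and, by the symmetry above, as $t\downarrow-T_{\mathrm{min}}$).

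The step I expect to be the main obstacle is making the differential identity for $\frac{d}{dt}\int(1+\log\Jbr{x})|u|^2$ rigorous, since $\log\Jbr{x}$ is an unbounded multiplier whereas the solution only lies in $\mathcal{H}$. I would handle this by truncation: choose bounded smooth weights $\psi_n$ with $0\le\psi_n\uparrow 1+\log\Jbr{x}$ pointwise and $\norm{\nabla\psi_n}_{L^\I}\le1$ uniformly in $n$ (for instance a smoothing of $\min\{1+\log\Jbr{x},\,n\}$, or a cutoff of $1+\log\Jbr{x}$ at scale $n$). For each fixed $n$ the multiplication by $\psi_n$ is bounded with bounded derivatives, so $\psi_n u\in\mathcal{H}$ and $t\mapsto\int\psi_n|u(t)|^2\,dx=\langle\psi_n u(t),u(t)\rangle$ is $C^1$ with derivative $2\Re\langle\psi_n u(t),\partial_t u(t)\rangle$; using \eqref{eq:SP} and the cancellations above this equals $\Im\int(\nabla\psi_n\cdot\nabla u)\,\overline{u}\,dx$, which yields the displayed inequality with $1+\log\Jbr{x}$ replaced by $\psi_n$, and then $n\to\I$ by monotone convergence gives the stated bound. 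Alternatively, one may run the computation for data in the dense subspace $\{f\in H^2(\R^2):\log\Jbr{x}\,f\in L^2\}$, in which \eqref{eq:SP} is well-posed as noted in the proof of Lemma \ref{lem:LWP}, and pass to the limit via continuous dependence. The remaining manipulations are routine.
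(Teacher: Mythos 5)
Your proof is correct and takes essentially the same route as the paper: reduce the claim to the blow-up alternative for the $\mathcal{H}$-norm, then control the weighted norm by the identity $\frac{d}{dt}\int\log\Jbr{x}\,|u|^2\,dx=\Im\int\frac{x}{1+|x|^2}\cdot\nabla u\,\overline{u}\,dx$ together with the boundedness of $\nabla\log\Jbr{x}$, so that a bound on $\Lebn{\nabla u(t)}2$ propagates to a bound on $\norm{u(t)}_{\mathcal{H}}$. The only difference is that you spell out the truncation/regularization needed to justify this identity for $\mathcal{H}$-solutions, a point the paper leaves implicit.
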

\begin{proof}
We only consider positive time.
Suppose $T_{\mathrm{max}}<\I$.
Then, $\norm{u(t)}_{\mathcal{H}}$ has to diverge as $t \uparrow T_{\mathrm{max}}$.
Otherwise, we can extend the solution beyond $T_{\mathrm{max}}$
by Lemma \ref{lem:LWP}.
Recall that $\Lebn{u(t)}2=\Lebn{u_0}2$.
Since
\begin{align*}
	\frac{d}{dt} \Lebn{\sqrt{\log\Jbr{x}} u(t)}2^2
	&{}= 2\Re \int (\log\Jbr{x}) \d_t u(t) \overline{u(t)} dx\\
	&{}= - \Im \int (\log\Jbr{x}) \Delta u(t) \overline{u(t)} dx \\
	&{}=  \Im \int \frac{x }{1+x^2}\cdot \nabla u(t) \overline{u(t)} dx,
\end{align*}
it holds that
\[
	\Lebn{\sqrt{\log\Jbr{x}} u(t_2)}2^2 \le
	\Lebn{\sqrt{\log\Jbr{x}} u(t_1)}2^2
	+ |t_2-t_1| \norm{\nabla u}_{L^\I((t_1,t_2);L^2)}\Lebn{u_0}2
\]
for all $-T_{\mathrm{min}}< t_1 <t_2 < T_{\mathrm{max}}$.
This implies that 
if we assume
\[
	\limsup_{t\uparrow T_{\mathrm{max}}}\Lebn{\nabla u(t)}2 <\I
\]
then $\norm{u(t)}_{\mathcal{H}}$ never blows up.
We hence obtain the lemma.
\end{proof}
\begin{remark}
As in \cite{GlJMP}, the solution breaks down with concentration at a point
if $\|\sqrt{\log\Jbr{x}}u(t)\|_{L^2}=0$. 
However, this does not occur when $\|\nabla u(t)\|$ is bounded above.
Indeed, since
\[
	\norm{u}_{L^2(|x|<r)} \le \norm{1}_{L^4(|x|<r)}\norm{u}_{L^4}
	\le Cr^{\frac12}\Lebn{\nabla u}2^{\frac12}
\]
for any $r>0$ and since
\[
	\norm{u}_{L^2(|x|<r)} = \Lebn{u_0}2 - \norm{u}_{L^2(|x|\ge r)}
	\ge \Lebn{u_0}2 - \frac{\|\sqrt{\log\Jbr{x}}u\|_{L^2}}{(\log\Jbr{r})^{1/2}},
\]
by letting $r=\|\sqrt{\log\Jbr{x}}u\|_{L^2}$, we obtain
\[
	\|\sqrt{\log\Jbr{x}}u\|_{L^2}^{-\frac12}
	\le C\(\frac{\|\sqrt{\log\Jbr{x}}u\|_{L^2}}{\log\Jbr{\|\sqrt{\log\Jbr{x}}u\|_{L^2}}}\)^{\frac12} + C\Lebn{\nabla u}2^{\frac12},
\]
which implies $\|\sqrt{\log\Jbr{x}}u\|_{L^2}$ is strictly positive if
$\Lebn{\nabla u}2<\I$.
\end{remark}

\begin{proof}[Proof of Theorem \ref{thm:main}]
Let us establish a priori estimate of $\norm{\nabla u(t)}_{L^2}$.
\smallbreak

We first consider the case $\l<0$.
Since $\log |x| \ge0$ for $|x|\ge1$,
\begin{align*}
	&-\frac{\l}{4\pi}\iint_{\R^{2+2}} \log |x-y| |u(x)|^2 |u(y)|^2 dxdy\\
	&{}	\ge - \frac{|\l|}{4\pi}\iint_{|x-y|<1} \abs{\log|x-y|}|u(x)|^2 |u(y)|^2 dxdy \\
	&{}\ge -  \frac{|\l|}{4\pi}\norm{\log |x|}_{L^2(|x|\le 1)}^2 \Lebn{u}4^2  \Lebn{u}2^2
\end{align*}
By the $L^2$-conservation and the Sobolev embedding,
we have
\begin{equation}\label{eq:l-case}
	\norm{\nabla u(t)}_{L^2}^2 \le 2E_0 + C \norm{\nabla u(t)}_{L^2}.
\end{equation}
Therefore, there exists a constant $M$ independent of $t$
such that $\norm{\nabla u(t)}_{L^2}\le M$.
\smallbreak

We now suppose $\l>0$.
By Lemma \ref{lem:estK}, for any $\eps>0$ there exists a constant $C_0$ such that
the following estimate holds:
\begin{align*}
	&\frac{\l}{4\pi}\iint_{\R^{2+2}} \log |x-y| |u(x)|^2 |u(y)|^2 dxdy\\
	&{} \le \frac{\l}{4\pi}\iint_{\R^{2+2}} \log \frac{|x-y|}{\Jbr{x}} |u(x)|^2 |u(y)|^2 dxdy
	+ \frac{\l}{4\pi}\Lebn{u_0}2^2 \Lebn{\sqrt{\log\Jbr{x}}u}2^2 \\
	&{}\le \frac{\l}{4\pi}(C_0 \Lebn{u_0}2^2 + \eps \Lebn{u}4^2)
	\Lebn{\sqrt{1+\log\Jbr{x}}u}2^2+ \frac{\l}{4\pi}\Lebn{u_0}2^2 \Lebn{\sqrt{\log\Jbr{x}}u}2^2\\
	&{} \le \frac{\l C_0}{4\pi} \Lebn{u_0}2^4 + \frac{\l(C_0+1)}{4\pi}\Lebn{u_0}2^2\Lebn{\sqrt{\log\Jbr{x}}u}2^2 + C \eps\Lebn{u_0}2^3 \Lebn{\nabla u}2\\
	& \quad + C\eps \Lebn{u_0}2 \Lebn{\nabla u}2 \Lebn{\sqrt{\log\Jbr{x}}u}2^2\\
	&{} \le C_1 + C_2(\eps+|t|) \sup_{s\in[0,t]}\Lebn{\nabla u(s)}2 +
	C_3 \eps |t| \sup_{s\in[0,t]}\Lebn{\nabla u(s)}2^2,  
\end{align*}
where $C_i$ ($i=1,2,3$) depends only on $\l$, $C_0$, $\norm{u_0}_{\mathcal{H}}$, and $\eps$.
Fix $T>0$. Taking $\eps< (8C_3T)^{-1}$, we deduce from the conservation of
$E(t)$ that
\begin{equation}\label{eq:l+case}
	\(\sup_{s\in[0,t]}\Lebn{\nabla u(s)}2\)^2 \le 4E(0) + 4C_1 + 4C_2(\eps+2T) \sup_{s\in[0,t]}\Lebn{\nabla u(s)}2
\end{equation}
for $0 \le t \le 2T$. This implies that
\[
	\sup_{t\in [0,2T] } \Lebn{\nabla u(t)}2 \le C(\norm{u_0}_{\mathcal{H}},T)<\I.
\]
Since $T$ is arbitrary, we obtain the global existence.
\end{proof}

\section{Remarks on the problem with power nonlinearity}\label{sec:power}
We give a rough sketch of the proofs of Theorem \ref{thm:withpower} and \ref{thm:lowpower} in this section.

\begin{proof}[Proof of Theorem \ref{thm:withpower}]
The local well-posedness part
holds if $p\ge2$ as in the proof of Lemma \ref{lem:LWP}.
The restriction $p\ge2$ is required when we estimate
\begin{multline*}
	|\nabla(|u_1|^{p-1} u_1 - |u_2|^{p-1} u_2)|
	\le C_p(|u_1|^{p-2}+ |u_2|^{p-2})(|\nabla u_1| + |\nabla u_2|)|u_1-u_2|\\
	+ C_p(|u_1|^{p-1}+ |u_2|^{p-1})|\nabla(u_1-u_2)|.
\end{multline*}
By exactly the same argument as in Lemma \ref{lem:BA},
the problem of global existence boils down to obtaining an a priori 
bound of $\Lebn{\nabla u(t)}2$.
Recall that the conserved energy is
\begin{align*}
	E_p(t):={}&\frac12 \Lebn{\nabla u(t)}2^2
	-\frac{\l}{4\pi} \int_{\R^2} (\log|x-y|)|u(t,x)|^2|u(t,y)|^2 dx dy \\
	 &{}+ \frac{\eta}{p+1}\Lebn{u(t)}{p+1}^{p+1}.
\end{align*}
\subsubsection*{The case $\eta>0$}
We have
\[
	\Lebn{\nabla u(t)}2^2
	\le E_p(t) + \frac{\l}{4\pi}\int_{\R^2} (\log|x-y|)|u(t,x)|^2|u(t,y)|^2 dx dy.
\]
By the same argument as in the case $\eta=0$, we prove global existence.
\subsubsection*{The case $\eta<0$ and $\l<0$}
Since
\[
	\frac{|\eta|}{p+1}\Lebn{u(t)}{p+1}^{p+1}
	\le C_{\eta,p} \Lebn{u_0}2^2 \Lebn{\nabla u(t)}2^{p-1},
\]
we obtain
\[
	\norm{\nabla u(t)}_{L^2}^2 \le 2E_0 + C \norm{\nabla u(t)}_{L^2}
	+ 2C_{\eta,p}\Lebn{u_0}2^2 \Lebn{\nabla u(t)}2^{p-1}
\]
as in \eqref{eq:l-case}.
Uniform bound of $\Lebn{\nabla u(t)}2$ is then obtained either
the case $p<3$ or the case $p\ge3$ and $\Lebn{u_0}2$ is small.
\subsubsection*{The case $\eta<0$ and $\l>0$}
As in \eqref{eq:l+case}, for any $T>0$,
there exist $\eps$, $C_1$, and $C_2$ such that
\begin{align*}
	\(\sup_{s\in[0,t]}\Lebn{\nabla u(s)}2\)^2 \le{}& 4E(0) + 4C_1 + 4C_2(\eps+2T) \sup_{s\in[0,t]}\Lebn{\nabla u(s)}2\\
	&{} + 4C_{\eta,p}\Lebn{u_0}2^2 \(\sup_{s\in[0,t]}\Lebn{\nabla u(s)}2\)^{p-1}.
\end{align*}
for $t\le 2T$.
Therefore, if $p<3$ or if $p=3$ and $\Lebn{u_0}2$ is small,
we obtain
\[
	\sup_{t\in [0,2T] } \Lebn{\nabla u(t)}2 \le C(\norm{u_0}_{\mathcal{H}},T)<\I.
\]
This concludes the proof of Theorem \ref{thm:withpower}.
\end{proof}

\begin{proof}[Proof of Theorem \ref{thm:lowpower}]
We denote $L^p((-T,T);X)=L^p_T X$.
Our strategy for local well-posedness is to use 
the contraction argument in a complete metric space
$(\mathcal{H}^{1,2}_{T,M},d)$, where
\begin{align*}
	&\mathcal{H}^{1,2}_{T,M}:= \{f \in C((-T,T);H^1);
	\norm{f}_{\mathcal{H}^{1,2}_{T}}\le M\},\\
	&\norm{f}_{\mathcal{H}^{1,2}_{T}}:=
	\norm{f}_{L^\I_T \mathcal{H}} + \norm{f}_{L^{q_0}_T W^{1,r_0}}
	+ \norm{f{\log\Jbr{x}}}_{L^{q_0}_T L^{r_0}}
\end{align*} 
for an admissible pair $(q_0,r_0)$ with $r_0>2$, and the metric $d$ is given by
\begin{equation}\label{eq:pmetric}
	d(f,g) = \norm{f-g}_{L^\I_T L^2} + \norm{f-g}_{L^{q_0}_T L^{r_0}}.
\end{equation}
We shall show
\begin{multline*}
	Q[u](t,x) := (e^{itA} u_0)(x) \\
	+ \frac{i}{2\pi} \(\int_{0}^t e^{i(t-s)A} 
	\(\int_{\R^2} \log\frac{|\cdot-y|}{\Jbr{\cdot}} |u(s,y)|^2 dy\) u(s,\cdot) ds\)(x)\\
	-i\eta \(\int_0^t e^{i(t-s)A} (|u|^{p-1}u)(s) ds \)(x)
\end{multline*}
is a contraction map in $(\mathcal{H}^{1,2}_{T,M},d)$.
Mimicking the proof of Lemma \ref{lem:LWP}, one shows that
for any $M>0$, there exists $T>0$ such that
$Q:\mathcal{H}^{1,2}_{T,M}\to \mathcal{H}^{1,2}_{T,M}$.
To prove $Q$ is a contraction with respect to the metric $d$, 
the following estimate is crucial:
\begin{align*}
	&\Lebn{\(\int_{\R^2} \log\frac{|x-y|}{\Jbr{x}} |u_1(y)|^2 dy\) u_1
	-\(\int_{\R^2} \log\frac{|x-y|}{\Jbr{x}} |u_2(y)|^2 dy\) u_2}2\\
	&{} \le \Lebn{\(\int_{\R^2} \log\frac{|x-y|}{\Jbr{x}} |u_1(y)|^2 dy\)
	(u_1- u_2)}2\\
	&\quad + \Lebn{\(\int_{\R^2} \log\frac{|x-y|}{\Jbr{x}} (|u_1(y)|^2-|u_2(y)|^2) dy\) u_2}2\\
	&{} \le C(\Lebn{u_1}2^2 + \|\sqrt{\log\Jbr{x}}u_1\|_{L^2}^2)
	(\Lebn{u_1-u_2}2 + \Lebn{u_1-u_2}{r_0}) \\
	&\quad  + C(\Lebn{|u_1|^2 -|u_2|^2}1 + \|(|u_1|^2-|u_2|^2)\log\Jbr{x}\|_{L^1})
	(\Lebn{u_2}2 + \Lebn{u_2}{r_0})\\
	&{} \le C(\Lebn{u_1}2^2 + \|\sqrt{\log\Jbr{x}}u_1\|_{L^2}^2)
	(\Lebn{u_1-u_2}2 + \Lebn{u_1-u_2}{r_0}) \\
	&\quad  + C(\Lebn{u_1}2 +\Lebn{u_2}2 + \|u_1\log\Jbr{x}\|_{L^2}+ \|u_2\log\Jbr{x}\|_{L^2})\\
	&\quad\quad\times (\Lebn{u_2}2 + \Lebn{u_2}{r_0})\Lebn{u_1-u_2}2.
\end{align*}
By the Strichartz estimate, letting $T$ smaller if necessary, we hence obtain
\[
	d(Q[u_1],Q[u_2]) \le \frac12 d(u_1,u_2)
\]
for any $u_1,u_2 \in \mathcal{H}^{1,2}_{T,M}$.

A similar result as Lemma \ref{lem:BA} holds since
\[
		\abs{\frac{d}{dt} \Lebn{\log\Jbr{x} u(t)}2^2}
	= \abs{ \Im \int \frac{2x \log\Jbr{x}}{1+x^2}\cdot \nabla u(t) \overline{u(t)} dx}\le C \Lebn{\nabla u(t)}2.
\]
Now, we have a priori bound of $\Lebn{\nabla u(t)}2$
as in the case $2\le p<3$ of Theorem \ref{thm:withpower},
which proves the global well-posedness.
\end{proof}

\subsection*{Acknowledgments}
The author expresses his deep gratitude
to Professor Patric G\'erard for fruitful discussions. 
Deep appreciation goes to Professor Hideo Kubo 
for his valuable advice and constant encouragement.
This research is supported by Grant-in-Aid for JSPS Fellows.

\bibliographystyle{amsplain}
\bibliography{caustic}

\end{document}